\def\th@plain{%
  \thm@notefont{}
  \itshape 
}
\def\th@definition{%
  \thm@notefont{}
  \normalfont 
}
\newtheorem{theorem}{Theorem}[section]
\newtheorem{proposition}[theorem]{Proposition}
\newtheorem{conjecture}[theorem]{Conjecture}
\newtheorem{lemma}[theorem]{Lemma}
\newtheorem{corollary}[theorem]{Corollary}
\begin{document}
\title{Cyclotomic Congruences and Lucas Sequences\footnotetext{\noindent Supported by National Natural Science Foundation of China, Project 12071421.}
\author{Tyler Ross\footnote{Corresponding author.}  \\ \small
\textit{School of Mathematical Sciences, Zhejiang University}\\
\small \textit{Hangzhou, Zhejiang, 310058, P.\ R.\ China} \\ \small \href{mailto:tylerxross@gmail.com}{\textit{tylerxross@gmail.com}}   
\and Zhongyan Shen \\ \small \textit{Department of Mathematics}, \\ \small \textit{Zhejiang International Studies University}\\
\small \textit{Hangzhou, Zhejiang, 310023, P.\ R.\ China} \\ \small \href{mailto:huanchenszyan@163.com}{\textit{huanchenszyan@163.com}}
\and Tianxin Cai \\ \small \textit{School of Mathematical Sciences, Zhejiang University}\\ \small \textit{Hangzhou, Zhejiang, 310058, P.\ R.\ China} \\ \small \href{mailto:txcai@zju.edu.cn}{\textit{txcai@zju.edu.cn}}}
\date{}
}

\maketitle

\begin{abstract}
    In this paper, we extend the $p$-adic valuations originally obtained by Carmichael for the sequences obtained by applying M\"obius inversion to Lucas sequences to $p$-adic congruences, from which we immediately derive corresponding congruences for Lucas sequences. As a corollary, we also establish some constraints on the entry point behavior of primes in Lucas sequences, on the basis of which we conjecture the presence of a strong Chebyshev-like bias in real regular Lucas sequences.
\end{abstract}

\noindent \textit{Keywords:} Lucas sequence, Sylvester sequence, M\"obius dual, cyclotomic polynomial, Fibonacci number \\

\noindent \textit{Mathematics Subject Classification 2020:} primary 11B50; secondary 11B39
\section{Introduction} \label{sec1}

Let $P, ~Q \in \mathbb{Z} \setminus(0)$ be any two nonzero integers such that also the discriminant $D=P^2 - 4Q$ of the polynomial $X^2-PX+Q \in \mathbb{Z}[X]$ is nonzero. Then the Lucas sequences \begin{eqnarray*} 
    U(P,Q) = (U_n(P,Q))_{n \geq 0}, \\
    V(P,Q) = (V_n(P,Q))_{n \geq 0},
\end{eqnarray*}
of the first and second kind respectively, in parameters $P, Q \in \mathbb{Z} \setminus(0)$ are the integer sequences given by the Binet forms
\begin{equation*}
    U_n = \frac{\alpha^n - \beta^n}{\alpha - \beta}, ~V_n = \alpha^n + \beta^n    
\end{equation*}
where
\begin{equation*}
    \alpha = \frac{P + \sqrt{D}}{2}, ~\beta = \frac{P-\sqrt{D}}{2}   
\end{equation*}
are the roots of the characteristic polynomial. It is well known that these sequences have the form of second-order linear recurrence sequences. In particular, when $P = 1$, $Q = -1$, we get the familiar Fibonacci numbers and Lucas numbers,
\begin{equation*}
    F = (F_n)_{n \geq 0} = (U_n(1,-1))_{n \geq 0} ~,~ L = (L_n)_{n \geq 0} = (V_n(1,-1))_{n \geq 0}.
\end{equation*}
In the following, we suppress all instances of the parameters $P$, $Q$ when these are taken to be arbitrary but fixed.

When $U, V$ are \textit{nondegenerate}, by which we mean that $U_n, V_n \neq 0$ for all $n \geq 1$, or equivalently that $\alpha/\beta$ is not a root of unity, we define the \textit{M\"obius dual sequences}
\begin{equation*}
    M^U_n = \prod_{d \mid n} U_d ^{\mu(n/d)}, ~M^V_n = \prod_{d \mid n} V_d ^{\mu(n/d)},
\end{equation*}
where $\mu: \mathbb{Z}_{>0} \longrightarrow \{-1,0,1\}$ is the M\"obius $\mu$-function. A straightforward calculation shows that for $n > 1$ the sequence $M^U$ can be written in terms of cyclotomic polynomials as
\begin{equation*}
    M^U_n = \beta^{\varphi(n)}\Phi_n(\alpha/\beta),
\end{equation*}
where $\varphi: \mathbb{N} \longrightarrow \mathbb{N}$ is Euler's totient function and $\Phi_n \in \mathbb{Z}[X]$ is the $n$-th cyclotomic polynomial. Such sequences are sometimes referred to as Sylvester sequences (see \cite{Syl}) and have been made use of extensively by Carmichael and others to study the divisibility properties of Lucas sequences (see \cite{Car}). It can be shown that $M^U_n \in \mathbb{Z}$ for all $n \geq 1$, while $M^V_n \in \mathbb{Z}$ for all odd $n \geq 1$ and at most finitely many even $n \geq 1$ (see \cite{Ross}).

We assume throughout the sequel that $U, V$ are nondegenerate. We do not, on the other hand, assume unless explicitly stated that they are \textit{regular}, that is, that $(P, Q)=1$.

In this paper, we extend the $p$-adic valuations obtained by Carmichael for the M\"obius duals of regular Lucas sequences to $p$-adic congruences, abandoning also the regularity requirement. As an immediate corollary, we derive $p$-adic congruences for $p$-adically adjacent terms in Lucas sequences, generalizing some previously known results. As a second corollary, we obtain constraints on the entry point behavior of primes in Lucas sequences, on the basis of which we conjecture that real regular Lucas sequences exhibit a strong Chebyshev-like bias.

\section{Main Results} \label{sec2}

We first fix some notation that we will make use of throughout the remainder of the paper. For $p$ prime, we write
\begin{equation*}
    z_U(p) = \text{min}(n \geq 1:p \mid U_n)
\end{equation*}
for the \textit{entry point}, or \textit{rank of apparition}, of $p$ in $U$. This number always exists, except in the case that $p \nmid P$ and $p \mid Q$. When $n=z_U(p)$, we say that $p$ is a \textit{characteristic factor} of $U_n$. We write
\begin{equation*}
    \partial_p(m)=m/p^{v_p(m)}
\end{equation*}
for the $p$-free part of $m \in \mathbb{Z} \setminus (0)$.

\begin{theorem} \label{modpk-thm}
Fix any positive integers $p, n \geq 1$, with $p$ prime and $(p,n)=1$. If $p \nmid (P,Q)$, we have the following congruences.

\begin{enumerate}[label=(\alph*)] 
\item \label{mpk1} If $p \mid D$, then 
\begin{equation*}
M^U_{p^k n} \equiv
\begin{cases}
p \pmod{p^k}, &\text{\emph{ if }} n=1,\\
1 \pmod{p^k}, &\text{\emph{ if }} n > 1,
\end{cases}
\end{equation*}
for all $k \geq 1$. If $p = 2$, then we have stronger congruences
\begin{equation*}
M^U_{2^kn} \equiv
\begin{cases}
2 \pmod{2^{k+1}}, &\text{\emph{if }} n = 1, k \geq 3 \\
1 \pmod{2^{k+1}}, &\text{\emph{if }} n > 1, k \geq 2;
\end{cases}
\end{equation*}
if $p > 2$, $n = 1$, then
\begin{equation*}
M^U_{p^k} \equiv p \pmod{p^{k+1}}
\end{equation*}
for all $k \geq 2$, if $p = 3$, and all $k \geq 1$, if $p > 3$.

\item  \label{mpk2} If $p \nmid D$, then
\begin{equation*}
        M^U_{p^k n} \equiv
        \begin{cases}
            \left( \frac{D}{p} \right) &\pmod{p^k}, \text{\emph{ if } } n =1, \\
            1 &\pmod{p^k}, \text{\emph{ if }} n > 1,        \end{cases}
    \end{equation*}
    for all $k \geq 1$, where $\left(\frac{~}{~} \right)$ is the Kronecker symbol, unless $n = z_U(p)$, in which case
    \begin{equation*}
        M^U_{p^k z_U(p)} \equiv \left(\frac{D}{p} \right)p \pmod{p^k}
    \end{equation*}
    for all $k \geq 1,$
    with stronger congruences
    \begin{equation*}
        M^U_{p^kz_U(p)} \equiv \left(\frac{D}{p} \right)p \pmod{p^{k+1}} 
    \end{equation*}
    for all $k \geq 2$, if $p = 2$, and all $k \geq 1$, if $p > 2$.
\end{enumerate}
If $p \mid (P, Q)$, then instead we have the following congruences.
\begin{enumerate}[label=(\alph*)] \setcounter{enumi}{2}
    \item \label{mpk4} For all $k \geq 1$,
    \begin{equation*}
        M^U_{p^kn} \equiv 0 \pmod{p^{p^{k-1}}},
    \end{equation*}
unless $2v_p(P) > v_p(Q)$, $p = 2$, $n = 1$, in which case we have only
\begin{equation*}
    M^U_{2^k} \equiv 0 \pmod{2^{2^{k-2}+1}}
\end{equation*}
for $k \geq 2$.
\end{enumerate}

\end{theorem}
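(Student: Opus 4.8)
The plan is to reduce the entire theorem to a computation with roots of unity modulo powers of $p$, exploiting the product description $M^U_n = \prod_{\zeta}(\alpha - \zeta\beta)$ over primitive $n$-th roots of unity $\zeta$ that follows from $M^U_n = \beta^{\varphi(n)}\Phi_n(\alpha/\beta)$. The first step is a self-similar reduction coming from the identity $\Phi_{p^k n}(X) = \Phi_{pn}(X^{p^{k-1}})$ for $(p,n)=1$: since $\varphi(p^k n) = p^{k-1}\varphi(pn)$, this gives
\[
M^U_{p^k n}(\alpha,\beta) = M^U_{pn}\bigl(\alpha^{p^{k-1}},\, \beta^{p^{k-1}}\bigr),
\]
and, for $n>1$, the further factorization $M^U_{pn}(A,B) = M^U_n(A^p,B^p)/M^U_n(A,B)$, while for $n=1$ one has $M^U_{pn}(A,B)=(A^p-B^p)/(A-B)$. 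So it suffices to control $M^U_n(A,B)$ and its image under the power map $A\mapsto A^p$, $B\mapsto B^p$, where $A=\alpha^{p^{k-1}}$, $B=\beta^{p^{k-1}}$.

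Next I would pass to the local ring $\mathbb{Z}_p[\alpha,\beta]$ and split into the cases dictated by the hypotheses: $p\nmid D$ with $\left(\frac{D}{p}\right)=1$ (split), $\left(\frac{D}{p}\right)=-1$ (inert), $p\mid D$ but $p\nmid Q$ (ramified, case \ref{mpk1}), and $p\mid(P,Q)$ (case \ref{mpk4}, both roots non-units). In the unit cases the key observation is that $A^{\varphi(p^k)}\equiv B^{\varphi(p^k)}\equiv 1 \pmod{p^k}$, so $A,B$ coincide mod $p^k$ with Teichm\"uller representatives $\omega_A,\omega_B$, roots of unity of order dividing $p^2-1$; this collapses $M^U_n(A,B)$ modulo $p^k$ to the honest product of root-of-unity differences $M^U_n(\omega_A,\omega_B)$. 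The power map then acts as $\omega\mapsto\omega^p$, which is the identity in the split case and the nontrivial Galois automorphism in the inert case, so that $\omega_B=\bar\omega_A$. A short manipulation using $\prod_{\zeta}(-\zeta)=1$ over the primitive $n$-th roots together with $\Phi_n(0)=1$ for $n>1$ shows the ratio $M^U_n(A^p,B^p)/M^U_n(A,B)$ is $\equiv 1$ in both cases for $n>1$, while for $n=1$ the sum $\sum_{j=0}^{p-1}\omega_A^j\omega_B^{p-1-j}=(\omega_A^p-\omega_B^p)/(\omega_A-\omega_B)$ evaluates to $+1$ (split) or $-1$ (inert), i.e. to $\left(\frac{D}{p}\right)$. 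When $n=z_U(p)$ exactly one factor $\alpha-\eta_0\beta$ vanishes mod the prime above $p$, namely $\eta_0\equiv\alpha/\beta$, a primitive $z_U(p)$-th root of unity in the residue field; isolating this factor produces the extra $\left(\frac{D}{p}\right)p$. In the ramified case the residues collide, $\alpha/\beta\equiv 1$, so each of the $p$ terms contributes $1$ and the sum is $\equiv p$, giving case \ref{mpk1}.

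Case \ref{mpk4} is then handled by a Newton-polygon analysis of $X^2-PX+Q$ at $p$: when $p\mid(P,Q)$ both roots have positive valuation, and $v_p(M^U_{p^k n})=\sum_{\zeta}v_p(\alpha-\zeta\beta)$ is bounded below by $\varphi(p^k n)\min(v_p\alpha,v_p\beta)$. For $p\geq 3$ this already exceeds $p^{k-1}$; the borderline is exactly $p=2$, $n=1$, $2v_2(P)>v_2(Q)$, where the two roots share the valuation $v_2(Q)/2$ and one must extract the additional unit of valuation by a sharper estimate to reach the stated $2^{2^{k-2}+1}$.

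Finally, the sharper congruences modulo $p^{k+1}$ require pushing the Teichm\"uller approximation one $p$-adic order further: $A=\alpha^{p^{k-1}}$ satisfies only $A^{p(p-1)}\equiv 1\pmod{p^{k+1}}$ rather than $A^{p-1}\equiv 1$, so I would track the next term of the expansion by a lifting-the-exponent argument. I expect this refinement, together with the precise determination of the exceptional small-prime ranges (the failures for $p=2$ below $k=3$ or $k=2$, and for $p=3$ below $k=2$), to be the main obstacle, since these are exactly the regimes in which the lifting-the-exponent formula acquires its correction terms and each must be verified directly; controlling the single vanishing factor modulo $p^{k+1}$ in the entry-point case is the other delicate point.
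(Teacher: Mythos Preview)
Your approach is sound and takes a genuinely different route from the paper's, although both rest on the same underlying Frobenius/lifting-the-exponent phenomenon. The paper works one-variably with $\zeta=\alpha/\beta$: it writes $\Phi_{p^kn}(\zeta)=\Phi_n(\zeta^{p^k})/\Phi_n(\zeta^{p^{k-1}})$, uses the Frobenius congruence $\zeta^p\equiv\zeta^{(D/p)}\pmod{p\mathcal{O}_{\mathcal K}}$ to match numerator and denominator modulo the relevant prime power, and in the entry-point case factors $\Phi_n$ explicitly as $\prod_{(m,n)=1}(X-\zeta^{p^km})$ over $\mathcal O_{\mathcal K}/p^{k+1}$, reducing $\Phi_{p^kn}(\zeta)$ to a product of values $\Phi_{p^k}(\zeta_m)$. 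You instead first pass to $k=1$ via the self-similarity $M^U_{p^kn}(\alpha,\beta)=M^U_{pn}(\alpha^{p^{k-1}},\beta^{p^{k-1}})$ and then replace $\alpha^{p^{k-1}},\beta^{p^{k-1}}$ by their Teichm\"uller representatives, after which the $p$-th-power map becomes an exact Galois automorphism and the ratio collapses. Your framing is cleaner conceptually and sidesteps the explicit polynomial factorization; the paper's is more hands-on and makes the precise moduli (in particular the $\mathfrak p^{2k\pm1}$ precision in the ramified case, which then intersects with $\mathbb Z$ to give the stated $p^k$ or $p^{k+1}$) easier to read off.

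Two points you should tighten. First, you do not treat the subcase $p\mid Q$, $p\nmid P$ of \ref{mpk2}, where one of $\alpha,\beta$ is a non-unit and your Teichm\"uller argument does not apply as written; the paper handles this separately and briefly (here $(D/p)=1$ and $\zeta$ vanishes modulo one of the two primes above $p$). Second, in the inert entry-point case the vanishing factor in $M^U_n(A^p,B^p)$ sits at $\zeta_0^{-1}$ rather than $\zeta_0$, so ``isolating this factor'' and extracting the sign $-1=\bigl(\frac{D}{p}\bigr)$ requires an explicit symmetry step; the paper obtains it from the contribution $(-1)^{\varphi(n)-1}=-1$ of the non-special factors for $n>2$, with $n=2$ checked separately. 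These are exactly the places you flag as delicate, but they deserve a concrete mechanism rather than a pointer.
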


\begin{theorem} \label{vmodpk-thm}
    Fix any positive integers $p, n \geq 1$ with $p$ prime and $(p,n)=1$; we also assume that $p \nmid (P,Q)$. Then
    \begin{equation*}
        M^V_{p^k n} \in \mathbb{Z}_{(p)} = \left\{r \in \mathbb{Q}: v_p(r) \geq 0 \right\},
    \end{equation*}
    for all $k \geq 1$ unless $p = 2$ divides $D$, $n = 1$, $k = 1$, or $p=2$ does not divide $D$, $n=3$, $k = 1$, or $p > 2$ does not divide $D$, and $n = z_U(p)$ is even; we may or may not have $M^V_{2^kn} \in \mathbb{Z}_{(2)}$ in the former two cases, and never have $M^V_{p^k n} \in \mathbb{Z}_{(p)}$ in the latter case. Excluding these exceptions,
    \begin{equation*}
        M^V_{p^k n} \equiv 1 \pmod{p^k}
    \end{equation*}
for all $k \geq 1$, unless $p = 2$ divides $D$, $n=1$, $k = 2$, in which case
\begin{equation*}
M^V_{4} \equiv \pm1 \pmod{4},    
\end{equation*}
or unless $p > 2$, $n = z_U(p)/2$, in which case
\begin{equation*}
    M^V_{p^k z_U(p)/2} \equiv \left(\frac{D}{p} \right)p \pmod{p^{k}}
\end{equation*}
for all $k \geq 1$, with stronger congruences
\begin{equation*}
    M^V_{p^k z_U(p)/2} \equiv \left(\frac{D}{p} \right)p \pmod{p^{k+1}}
\end{equation*}
for all $k \geq 1$, if moreover $n=z_U(p)/2$ is odd.
\end{theorem}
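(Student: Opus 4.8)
The plan is to reduce the entire statement to Theorem \ref{modpk-thm} by first expressing the dual $M^V$ in terms of $M^U$. From the Binet forms one has $V_n = \alpha^n+\beta^n = (\alpha^{2n}-\beta^{2n})/(\alpha^n-\beta^n) = U_{2n}/U_n$, whence
\[
M^V_N=\prod_{d\mid N}V_d^{\mu(N/d)}=\frac{1}{M^U_N}\prod_{d\mid N}U_{2d}^{\mu(N/d)} .
\]
First I would evaluate the surviving product. Writing $N=2^sm$ with $m$ odd and comparing the exponent of each factor $U_e$ on both sides --- using that $\mu$ is supported on squarefree arguments, so that only the divisors $d\mid N$ with $v_2(d)\in\{s-1,s\}$ contribute --- one obtains
\[
\prod_{d\mid N}U_{2d}^{\mu(N/d)}=\begin{cases}M^U_{2N}M^U_N,& N\ \text{odd},\\ M^U_{2N},& N\ \text{even}.\end{cases}
\]
(The same identity also drops out of the cyclotomic representation $M^U_n=\beta^{\varphi(n)}\Phi_n(\alpha/\beta)$ together with $\varphi(2N)=\varphi(N)$ or $2\varphi(N)$ according to the parity of $N$.) I would record the resulting \emph{reduction identity} $M^V_N=M^U_{2N}$ for $N$ odd and $M^V_N=M^U_{2N}/M^U_N$ for $N$ even as a lemma; it already accounts for the structural split between $p=2$ and odd $p$ seen in the statement.

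Next I would set $N=p^kn$ and separate the three cases forced by the reduction identity. If $p$ and $n$ are both odd then $p^kn$ is odd and $M^V_{p^kn}=M^U_{p^k(2n)}$ is a single dual term whose part coprime to $p$ is $2n$; if $p$ is odd and $n$ even then $M^V_{p^kn}=M^U_{p^k(2n)}/M^U_{p^kn}$; and if $p=2$ then $M^V_{2^kn}=M^U_{2^{k+1}n}/M^U_{2^kn}$, where now \emph{both} factors carry the same odd part $n$. In every case the factors are controlled by Theorem \ref{modpk-thm}, whose exceptional behavior is triggered exactly when a part coprime to $p$ equals $1$ or the entry point $z_U(p)$. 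Tracking $p$-adic valuations then settles membership in $\mathbb{Z}_{(p)}$, since the quotient lies in $\mathbb{Z}_{(p)}$ whenever the denominator is a $p$-adic unit; and dividing the two congruences --- a $p$-adic unit by a $p$-adic unit, each congruent to the same value to a sufficient power of $p$ --- yields $M^V_{p^kn}\equiv1\pmod{p^k}$ off the exceptional locus. The residue $\left(\tfrac{D}{p}\right)p$ surfaces precisely when the numerator's coprime part meets $z_U(p)$: for odd $p$ this is $2n=z_U(p)$, i.e. $n=z_U(p)/2$, and the extra precision $\pmod{p^{k+1}}$ comes from the strong congruence of Theorem \ref{modpk-thm} applied to the numerator. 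For $p=2$, by contrast, $z_U(2)$ lives in the common odd part $n$ of numerator and denominator, so its contributions cancel and no such residue appears --- this is the asymmetry between the two primes.

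The bulk of the work, and the main obstacle, lies in the boundary cases where Theorem \ref{modpk-thm} supplies only a weak congruence, leaving the valuation of the \emph{denominator} undetermined; these are exactly the small-$k$ situations at $p=2$. For $p=2\mid D$ (which forces $P$ even and $Q$ odd) with $n=1$ I would argue directly from the explicit values $M^U_2=P$, $M^U_4=P^2-2Q$, $M^U_8$: at $k=1$ the denominator $M^U_2=P$ has $v_2(P)\ge1$ but otherwise unknown, so $M^V_2=M^U_4/P$ lies in $\mathbb{Z}_{(2)}$ exactly when $v_2(P)=1$, giving the ``may or may not'' clause; at $k=2$ the denominator $M^U_4$ is pinned only modulo $4$, and inverting $M^U_4/2$ against $M^U_8/2\equiv1\pmod 8$ yields $M^V_4\equiv\pm1\pmod4$ with the sign genuinely undetermined. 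The case $p=2\nmid D$, $n=3=z_U(2)$, $k=1$ is identical in spirit, the denominator $M^U_6$ having undetermined $2$-adic valuation. Finally, for the non-membership assertion ($p>2\nmid D$ and $n=z_U(p)$ even) I would invoke the strong congruence of Theorem \ref{modpk-thm} to fix $v_p(M^U_{p^kn})=1$, while the numerator $M^U_{p^k(2n)}$ is a unit, so that $v_p(M^V_{p^kn})=-1$ for every $k\ge1$. Away from this short list the two congruences divide cleanly; it is only at these exceptional indices that the mod-$p^k$ information is too coarse and one must descend to the explicit low-index values of $M^U$ to decide both membership and the precise residue.
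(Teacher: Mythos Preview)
Your approach is essentially identical to the paper's: the ``reduction identity'' you derive is exactly the doubling formula stated as Proposition~\ref{dbl-thm}, and the paper's proof consists of the single sentence that the congruences for $M^V$ follow by applying that formula together with Theorem~\ref{modpk-thm}. You supply considerably more of the case-by-case detail than the paper does, but the strategy is the same.
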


 Applying the $p$-adic congruences in Theorems \ref{modpk-thm} and \ref{vmodpk-thm} at each prime dividing $n > 1$, we readily obtain congruences for the sequences $M^U_n$, $M^V_n$ modulo $n$ for all $n \geq 1$ (provided in the latter case that $M^V_n \in \mathbb{Z}_{(n)}$, for which condition we give a complete characterization); we can assume that $n \neq p^k$ has at least two distinct prime factors, the situation at prime powers having already been completely determined. We also consider here only the case that $(n,(P,Q))=1$.

\begin{corollary} \label{modn-thm}
Suppose $n > 1$ has at least two distinct prime factors, with $(n,(P,Q))=1$, and let $p$ be the largest prime dividing $n$.
\begin{enumerate}[label=(\alph*)]
\item \label{modn1} Then
\begin{equation*}
    M^U_n \equiv
    \begin{cases}
        \left(\frac{D}{p} \right)p &\pmod{n},  \text{\emph{ if }} \partial_{p}(n) = z_U(p), \\
        1 &\pmod{n}, \text{\emph{ otherwise}},
    \end{cases}
\end{equation*}
unless $2 \nmid PQ$ and $n = 2^k \cdot 3$ $(k \geq 1)$, in which case
\begin{align*}
    M^U_6 &\equiv
    \begin{cases}
        0 &\pmod{6}, \text{\emph{ if }} z_U(3)=2, \\ 
        4 &\pmod{6}, \text{\emph{ if }} 3 \mid Q, \text{\emph{ or }} z_U(3)= 3 \text{\emph{ or }} 4,
    \end{cases}, \\
    M^U_{12} &\equiv
    \begin{cases}
        10 &\pmod{12}, \text{\emph{ if }} 3 \mid Q, \text{\emph{ or }} z_U(3)=2 \text{\emph{ or }} 3, \\ 
        6 &\pmod{12}, \text{\emph{ if }} z_U(3)=4,
    \end{cases} \\
    M^U_{2^k \cdot 3} &\equiv 2 \left(\frac{D}{2} \right) \pmod{2^k \cdot 3}, \text{\emph{ if }} k \geq 3.
\end{align*}
\item \label{modn2} Similarly,
\begin{equation*}
    M^V_n \equiv 
    \begin{cases}
        \left(\frac{D}{p} \right)p &\pmod{n}, \text{\emph{ if }} \partial_p(n) = z_U(p)/2, \\
        1 &\pmod{n}, \text{\emph{ otherwise}},
    \end{cases}
\end{equation*}
unless $n = 6$, or $\partial_p(n) = z_U(p)$ is even, in which case it does not hold in general that $M^V_n \in \mathbb{Z}_{(n)}$.
\end{enumerate}
\end{corollary}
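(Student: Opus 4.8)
The plan is to reduce both parts to the prime-power congruences of Theorems \ref{modpk-thm} and \ref{vmodpk-thm} and then reassemble them via the Chinese Remainder Theorem. Write $n=\prod_i p_i^{k_i}$ for the prime factorization. For each $i$ we have $(p_i,\partial_{p_i}(n))=1$ with $v_{p_i}(n)=k_i$, so Theorem \ref{modpk-thm} applies at $p_i$ with coprime index $\partial_{p_i}(n)$ and exponent $k_i$, giving a congruence for $M^U_n$ modulo $p_i^{k_i}$. Because $n$ has at least two distinct prime factors, $\partial_{p_i}(n)>1$ for every $i$, so only the branches with index $>1$ ever occur and the isolated prime-power behavior is never triggered. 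Inspecting those branches (and recalling $p_i\nmid(P,Q)$), for each $i$ we obtain $M^U_n\equiv 1\pmod{p_i^{k_i}}$, the sole exception being $p_i\nmid D$ together with $\partial_{p_i}(n)=z_U(p_i)$, in which case $M^U_n\equiv\left(\frac{D}{p_i}\right)p_i\pmod{p_i^{k_i}}$.

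The crucial ingredient for gluing these is the classical law of apparition $z_U(p)\mid p-\left(\frac{D}{p}\right)$ for $p\nmid 2QD$, which I would cite from \cite{Car}. It does two things. First, it confines exceptions to the largest prime: if $\partial_q(n)=z_U(q)$ with $q\nmid D$, then $z_U(q)=\partial_q(n)\le q+1$, whereas any prime $q$ that is not the largest satisfies $\partial_q(n)\ge p^{v_p(n)}\ge p\ge q+1$; equality throughout forces $q=2$, $p=3$, $z_U(2)=3$, and $n=2^k\cdot 3$. Away from this edge case only the largest prime $p$ can be exceptional. Second, it supplies compatibility: from $p\equiv\left(\frac{D}{p}\right)\pmod{z_U(p)}$ and $\left(\frac{D}{p}\right)=\pm1$ we get $\left(\frac{D}{p}\right)p\equiv1\pmod{z_U(p)}$, and in the exceptional case $z_U(p)=\partial_p(n)=\prod_{q\ne p}q^{v_q(n)}$, so the value $\left(\frac{D}{p}\right)p$ is already $\equiv1$ modulo every $q^{v_q(n)}$ with $q\ne p$. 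Hence the per-prime congruences glue: either no prime is exceptional and $M^U_n\equiv1\pmod n$, or the largest prime is exceptional and $M^U_n\equiv\left(\frac{D}{p}\right)p\pmod n$, which is part \ref{modn1} outside the edge case.

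The main obstacle is precisely the exceptional family $n=2^k\cdot3$ with $2\nmid PQ$ (equivalently $z_U(2)=3$), where the largest-prime argument fails because the smaller prime $2$ is itself exceptional. Here I would argue by hand. When $3\nmid Q$ the entry point $z_U(3)$ divides $3-\left(\frac{D}{3}\right)$ and hence lies in $\{2,3,4\}$, while $3\mid Q$ forces $3\nmid U_m$ for all $m$. For $k\ge3$ we have $\partial_3(n)=2^k>4\ge z_U(3)$, so $3$ is never exceptional and only $2$ contributes: $M^U_n\equiv\left(\frac{D}{2}\right)2\pmod{2^k}$ and $M^U_n\equiv1\pmod3$, and since $2\left(\frac{D}{2}\right)\equiv1\pmod3$ by the law of apparition at $2$, these combine to $M^U_n\equiv2\left(\frac{D}{2}\right)\pmod{2^k\cdot3}$. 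For $n=6$ and $n=12$ the modulus at $2$ is only $2$ or $4$ and prime $3$ may or may not be exceptional; I would run through the finitely many possibilities for $z_U(3)$ (and the case $3\mid Q$) and compute the CRT combination in each, recovering the tabulated residues $0,4\pmod6$ and $6,10\pmod{12}$.

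Part \ref{modn2} follows the identical scheme driven by Theorem \ref{vmodpk-thm}, with $z_U(p)/2$ replacing $z_U(p)$. I would first settle integrality: $M^V_n\in\mathbb{Z}_{(n)}$ iff $M^V_n\in\mathbb{Z}_{(p_i)}$ for every $i$, and the only failures permitted by Theorem \ref{vmodpk-thm} at an index $>1$ are $p_i=2\nmid D$ with $\partial_2(n)=3$, $v_2(n)=1$ (that is, $n=6$), and $p_i>2\nmid D$ with $\partial_{p_i}(n)=z_U(p_i)$ even; the largest-prime argument confines the latter to $p$, yielding exactly the stated exceptions $n=6$ or $\partial_p(n)=z_U(p)$ even. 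On the integral locus the bound is even cleaner: $\partial_q(n)=z_U(q)/2$ would give $z_U(q)=2\partial_q(n)\ge2p>q+1\ge z_U(q)$ for any non-largest $q$, a strict contradiction, so no edge case survives and only the largest prime can be special. Compatibility $\left(\frac{D}{p}\right)p\equiv1\pmod{z_U(p)/2}$ follows as before from $z_U(p)/2\mid z_U(p)\mid p-\left(\frac{D}{p}\right)$, and CRT then gives $M^V_n\equiv\left(\frac{D}{p}\right)p\pmod n$ when $\partial_p(n)=z_U(p)/2$ and $M^V_n\equiv1\pmod n$ otherwise, which is part \ref{modn2}.
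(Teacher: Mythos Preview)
Your proposal is correct and follows the same strategy as the paper: apply the prime-power congruences of Theorems \ref{modpk-thm} and \ref{vmodpk-thm} at each prime dividing $n$, observe via $z_U(q)\le q+1$ that only the largest prime $p$ can satisfy $\partial_q(n)=z_U(q)$ (with the lone edge case $q=2$, $p=3$, $n=2^k\cdot 3$), and glue by CRT. The paper's own proof is extremely terse---it records only the largest-prime observation for part \ref{modn1} and invokes the doubling formula for part \ref{modn2}---so you are in fact supplying detail the paper omits: in particular, the CRT compatibility step $\left(\frac{D}{p}\right)p\equiv 1\pmod{\partial_p(n)}$ (which is needed and which the paper leaves implicit), and the explicit case analysis for $n=6,12$. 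For part \ref{modn2} you go through Theorem \ref{vmodpk-thm} rather than the doubling formula directly; since that theorem is itself proved via the doubling formula, the two routes are equivalent.
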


The following $p$-adic congruences for $p$-adically adjacent Lucas sequence terms follow immediately from the corresponding congruences for $M^U$, $M^V$. Congruences of the forms \ref{lmp1} and \ref{lmp1.5} were first observed by Young (\cite{Young}), in some cases with a weaker modulus.

\begin{corollary} \label{lift-thm}
Fix any positive integers $p, n \geq 1$ with $p$ prime and $(p, n)= 1$. If $p \nmid (P,Q)$, we have the following congruences.
    \begin{enumerate} [label = (\alph*)]
        \item \label{lmp1} If $p \mid D$, then 
        \begin{equation*} 
            \frac{U_{p^k n}}{U_{p^{k-1}n}} \equiv p \pmod{p^k}
        \end{equation*}
    for all $k \geq 1$, with stronger congruences
        \begin{equation*} 
            \frac{U_{p^k n}}{U_{p^{k-1}n}} \equiv p \pmod{p^{k+1}}
        \end{equation*}
    for all $n \geq 1$, if $p = 2$ and $k \geq 3$, or $p = 3$ and $k \geq 2$, or $p \geq 5$ and $k \geq 1$. 
    \item \label{lmp1.5} If $p \nmid D$, then 
        \begin{equation*} 
            \frac{U_{p^k n}}{U_{p^{k-1}n}} \equiv \left(\frac{D}{p} \right) \pmod{p^{k}}
        \end{equation*}
        for all $k \geq 1$, unless $z_U(p) \mid n$, in which case
        \begin{equation*} 
            \frac{U_{p^k n}}{U_{p^{k-1}n}} \equiv p \pmod{p^k}
        \end{equation*}
        for all $k \geq 1$, with stronger congruences
        
        \begin{equation*} 
            \frac{U_{p^k n}}{U_{p^{k-1}n}} \equiv p \pmod{p^{k+1}}
        \end{equation*}
    if $p = 2$ and $k \geq 2$, or $p > 2$ and $k \geq 1$.
        \item \label{lmp2} Except for the cases $p = 2$ divides $D$, or $p = 2$ does not divide $D$, and $3 \mid n$, $k = 1$, or $p > 2$ does not divide $D$ and $z_U(p) \mid n$, we have
        \begin{equation*}
            \frac{V_{p^k n}}{V_{p^{k-1}n}} \in \mathbb{Z}_{(p)}, ~\frac{V_{p^k n}}{V_{p^{k-1}n}} \equiv 1 \pmod{p^k}
        \end{equation*}
        for all $k, n \geq 1$ and any prime $p$; in the exceptional cases, we do not \emph{a priori} have $V_{p^k n}/V_{p^{k-1}n} \in \mathbb{Z}_{(p)}$, but it is always the case that
        \begin{equation*}
            V_{p^k n} \equiv V_{p^{k-1}n} \pmod{p^k}
        \end{equation*}
        for all $k, n \geq 1$ and any prime $p$.
    \end{enumerate}
    If $p \mid (P, Q)$, we have the following congruences.
    \begin{enumerate}[label=(\alph*)] \setcounter{enumi}{3}
    \item \label{lmp3} For all $k \geq 1$,
    \begin{equation*}
         \frac{U_{p^k n}}{U_{p^{k-1}n}} \equiv 0 \pmod{p^{p^{k-1}}},
    \end{equation*}
unless $2v_p(P) > v_p(Q)$, $p = 2$, $n = 1$, in which case we have only the weaker congruence
\begin{equation*}
    \frac{U_{2^k}}{U_{2^{k-1}}} \equiv 0 \pmod{2^{2^{k-2}+1}}
\end{equation*}
for $k \geq 2$.

\end{enumerate}

\end{corollary}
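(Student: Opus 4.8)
The whole corollary rests on a single telescoping identity. M\"obius-inverting the definitions of the dual sequences gives $U_m=\prod_{d\mid m}M^U_d$ and $V_m=\prod_{d\mid m}M^V_d$; since the divisors of $p^kn$ that are not divisors of $p^{k-1}n$ are exactly the integers $p^kd$ with $d\mid n$ (here we use $(p,n)=1$), these products collapse to
\[
\frac{U_{p^kn}}{U_{p^{k-1}n}}=\prod_{d\mid n}M^U_{p^kd},\qquad
\frac{V_{p^kn}}{V_{p^{k-1}n}}=\prod_{d\mid n}M^V_{p^kd}.
\]
Every factor on the right has $(p,d)=1$, so Theorems \ref{modpk-thm} and \ref{vmodpk-thm} apply to it termwise, and the task reduces to multiplying the resulting congruences.

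For \ref{lmp1}, \ref{lmp1.5} and \ref{lmp3} I would feed Theorem \ref{modpk-thm} into the first product. In cases \ref{mpk1}--\ref{mpk2} all factors $M^U_{p^kd}$ with $d>1$ and $d\neq z_U(p)$ are $\equiv1\pmod{p^k}$, so their product is $\equiv1\pmod{p^k}$; the term $M^U_{p^k}$ contributes $p$ in case \ref{mpk1} and $\left(\frac{D}{p}\right)$ in case \ref{mpk2}, and in case \ref{mpk2}, precisely when $z_U(p)\mid n$, the term $M^U_{p^kz_U(p)}$ contributes the extra factor $\left(\frac{D}{p}\right)p$. Multiplying, and using $\left(\frac{D}{p}\right)^2=1$ when the characteristic factor is present, yields exactly the two stated branches. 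The mechanism behind the sharper moduli is that only one factor carries an isolated power of $p$: writing that factor as $p+p^{k+1}(\cdots)$ and each remaining factor as $1+p^k(\cdots)$, the leading $p$ promotes every residual $p^k$ term to $p^{k+1}$, so the product is again $p+p^{k+1}(\cdots)$. This is why the $\pmod{p^{k+1}}$ refinements survive even though the non-characteristic factors are controlled only modulo $p^k$. Part \ref{lmp3} is then immediate from \ref{mpk4}: each $M^U_{p^kd}$ is an integer divisible by $p^{p^{k-1}}$, so the product is as well, and for $n=1$ the single factor reproduces the weaker exceptional modulus.

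For the main clause of \ref{lmp2} the same bookkeeping with Theorem \ref{vmodpk-thm} applies: away from the listed exceptions every factor $M^V_{p^kd}$ lies in $\mathbb{Z}_{(p)}$ and is $\equiv1\pmod{p^k}$, so their product is $\equiv1\pmod{p^k}$, whence $V_{p^kn}-V_{p^{k-1}n}=V_{p^{k-1}n}(\text{product}-1)$ is divisible by $p^k$ because $V_{p^{k-1}n}\in\mathbb{Z}$. The exceptions are precisely the indices at which some factor $M^V_{p^kd}$, with $d$ a multiple of $z_U(p)/2$, fails to be $\equiv1\pmod{p^k}$---either by leaving $\mathbb{Z}_{(p)}$ or by being divisible by $p$---together with the small-prime exceptions at $p=2$. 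There the product identity no longer lives in $\mathbb{Z}_{(p)}$ and the multiplicative argument is unavailable.

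Recovering the additive congruence $V_{p^kn}\equiv V_{p^{k-1}n}\pmod{p^k}$ in the exceptional cases is where I expect the real obstacle. I would abandon the $M^V$ product and instead use the elementary Binet identity $U_{2m}=U_mV_m$ to write
\[
\frac{V_{p^kn}}{V_{p^{k-1}n}}=\frac{U_{2p^kn}/U_{2p^{k-1}n}}{U_{p^kn}/U_{p^{k-1}n}},
\]
which for $p=2$ is re-bracketed as a quotient of two consecutive-level $U$-ratios at the odd base $n$, and to which \ref{lmp1}--\ref{lmp1.5} already apply. Cancelling the common power of $p$ between numerator and denominator pins down both the $p$-adic valuation and the residue of the $V$-ratio; one then estimates $V_{p^kn}-V_{p^{k-1}n}=V_{p^{k-1}n}(\text{ratio}-1)$ by combining that valuation with a lifting-the-exponent bound for $v_p(V_{p^{k-1}n})$, the two contributions always summing to at least $k$. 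The fussy part---and the only point at which Theorem \ref{vmodpk-thm} does not by itself suffice---is matching each entry of the exceptional list ($p=2\mid D$; $p=2\nmid D$ with $3\mid n$ and $k=1$; and $p$ odd with $n$ a multiple of $z_U(p)/2$) to the correct valuation split, since the balance between $v_p(\text{ratio}-1)$ and $v_p(V_{p^{k-1}n})$ shifts from one case to the next.
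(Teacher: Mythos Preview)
Your telescoping identity
\[
\frac{U_{p^kn}}{U_{p^{k-1}n}}=\prod_{d\mid n}M^U_{p^kd},\qquad
\frac{V_{p^kn}}{V_{p^{k-1}n}}=\prod_{d\mid n}M^V_{p^kd}
\]
and the termwise application of Theorems \ref{modpk-thm} and \ref{vmodpk-thm} are exactly what the paper does, and your ``promotion'' argument explaining why the single factor carrying $p$ lifts the modulus from $p^k$ to $p^{k+1}$ is a correct and clear elaboration of a step the paper leaves implicit. So for parts \ref{lmp1}, \ref{lmp1.5}, \ref{lmp3} and the main clause of \ref{lmp2} your proof coincides with the paper's.

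The only genuine divergence is in the exceptional additive congruence $V_{p^kn}\equiv V_{p^{k-1}n}\pmod{p^k}$. You propose routing through $V_m=U_{2m}/U_m$, rewriting the $V$-ratio as a quotient of two $U$-ratios, cancelling the common factor of $p$, and then balancing $v_p(\text{ratio}-1)$ against $v_p(V_{p^{k-1}n})$. This can be made to work, but as you yourself flag, it fragments into cases (the stronger $U$-congruences only kick in at $k\geq 3$ when $p=2$, so $k=1,2$ need ad hoc treatment) and you have not actually carried out the valuation bookkeeping. The paper bypasses all of this: it observes directly that $V_m=\alpha^m+\beta^m$ is a power sum of algebraic integers, and the congruence $V_{p^kn}\equiv V_{p^{k-1}n}\pmod{p^k}$ is the classical Gauss-type congruence for such power sums, obtained in one stroke from the fact that Frobenius permutes $\{\alpha,\beta\}$ modulo $p$ together with the standard lift $x\equiv y\pmod{p^j}\Rightarrow x^p\equiv y^p\pmod{p^{j+1}}$. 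This handles all primes and all $k$ uniformly, with no case analysis and no appeal to the $M^V$ machinery at all. Your detour through $U_{2m}=U_mV_m$ buys nothing here and costs you the clean finish.
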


As a second application, we obtain from the congruences in Theorem \ref{modpk-thm} and Corollary \ref{modn-thm} constraints on the entry point behavior of certain primes in regular Lucas sequences.

\begin{corollary} \label{mult-cor}
Let $U$ be a regular Lucas sequence, fix $n > 1$ and let $p$ be the largest prime dividing $n$. 
\begin{enumerate}[label = (\alph*)]
    \item If $p \nmid D$ and $\partial_p(n) = 1 \text{\emph{ or }} \partial_p(n) =z_U(p)$, then
    \begin{equation*}
        \prod_{z_U(q)=n}\left(\frac{D}{q} \right)^{v_q(U_n)}=\left(\frac{D}{p}\right) \times \text{\emph{sign}}(M^U_n),
    \end{equation*}
    where $\text{\emph{sign}}(m)=m/\left|m\right|$ for $m \in \mathbb{Z} \setminus (0)$, the product running through all characteristic factors $q$ of $U_n$, except possibly if $n=p=2$, or if $p \mid P$ and $n=2p$. \item Otherwise,
    \begin{equation*}
        \prod_{z_U(q)=n}\left(\frac{D}{q}\right)^{v_q(U_n)}=\text{\emph{sign}}(M^U_n),
    \end{equation*}
    except possibly if $p=2$ divides $D$, and $n = 2 \text{ \emph{or} } 4$, or $p > 2$ divides $D$, and $n = p$, or if $2 \nmid PQ$ and $n = 2^k \cdot 3$ $(k \geq 1)$; in the latter case,
    \begin{equation*}
        \prod_{z_U(q)=2^k \cdot 3}\left(\frac{D}{q}\right)^{v_q(U_{2^k \cdot 3})}=\left(\frac{D}{2} \right) \times \text{\emph{sign}}(M^U_{2^k \cdot 3})
    \end{equation*}
    for all $k \geq 3$.
\end{enumerate}
\end{corollary}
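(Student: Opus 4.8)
The plan is to translate the product of Kronecker symbols into a product of residues modulo $n$ and then to read off that product from the value of $M^U_n$ modulo $n$ furnished by Corollary \ref{modn-thm}. The bridge between the two is the law of apparition: for every characteristic factor $q$ of $U_n$ with $q \nmid 2D$ one has $z_U(q) = n \mid q - \left(\frac{D}{q}\right)$, so that $\left(\frac{D}{q}\right) \equiv q \pmod{n}$. Raising to the power $v_q(U_n)$ and multiplying over all such $q$ gives
\begin{equation*}
\prod_{z_U(q)=n}\left(\frac{D}{q}\right)^{v_q(U_n)} \equiv \prod_{z_U(q)=n} q^{\,v_q(U_n)} \pmod{n},
\end{equation*}
so it suffices to evaluate the right-hand product modulo $n$ and to note that, both sides being $\pm1$, the congruence forces an equality as soon as the modulus in play exceeds $2$.

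Next I would identify $\prod_{z_U(q)=n} q^{\,v_q(U_n)}$ with the primitive part of $\lvert M^U_n\rvert$. Since $v_q(M^U_n)=v_q(U_n)$ at every characteristic factor $q$, the only other prime that can occur in $M^U_n = \beta^{\varphi(n)}\Phi_n(\alpha/\beta)$ is an intrinsic one; by the size bound $z_U(q) \le q+1$ forced by the law of apparition, the intrinsic prime, when present, must be the largest prime $p \mid n$ and must occur to the first power. The sole escape from this size argument is $q=2$ with $z_U(2)=3$, which produces exactly the anomalous family $n = 2^k \cdot 3$. Thus, away from that family,
\begin{equation*}
\prod_{z_U(q)=n} q^{\,v_q(U_n)} = \frac{\lvert M^U_n\rvert}{p^{\,\varepsilon}} = \operatorname{sign}(M^U_n)\,\frac{M^U_n}{p^{\,\varepsilon}}, \qquad \varepsilon = v_p(M^U_n) \in \{0,1\},
\end{equation*}
with $\varepsilon = 1$ precisely when $p$ is intrinsic, i.e. when $\partial_p(n) = z_U(p)$ (for $p \nmid D$) or $n$ is a pure power of $p$ with $p \mid D$.

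I would then substitute the residues from Corollary \ref{modn-thm}\ref{modn1} (and, for prime-power $n$, directly from Theorem \ref{modpk-thm}). When $p$ is not intrinsic and $\partial_p(n) \neq z_U(p)$ one has $M^U_n \equiv 1 \pmod n$, giving $\prod \left(\frac{D}{q}\right)^{v_q(U_n)} \equiv \operatorname{sign}(M^U_n)\pmod n$; when $\partial_p(n)=1$ with $p \nmid D$ one has $M^U_n \equiv \left(\frac{D}{p}\right)\pmod n$, giving $\prod \equiv \left(\frac{D}{p}\right)\operatorname{sign}(M^U_n)$; and when $p$ is intrinsic one has $M^U_n \equiv \left(\frac{D}{p}\right)p \pmod n$ (respectively $M^U_n \equiv p \pmod{p^k}$ when $p \mid D$), so that after cancelling the factor $p$ the congruence $\prod \equiv \left(\frac{D}{p}\right)\operatorname{sign}(M^U_n)$ holds modulo $n/p$. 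In each case both members lie in $\{\pm 1\}$, so equality follows whenever the relevant modulus ($n$ or $n/p$) is larger than $2$. This yields the two stated identities, and the failures of the ``modulus $>2$'' condition --- namely $n = p = 2$; $p \mid P$ (so $z_U(p) = 2$) with $n = 2p$; $p = 2 \mid D$ with $n = 2$ or $4$; and $p > 2 \mid D$ with $n = p$, where $\left(\frac{D}{p}\right)=0$ makes $p$ itself a degenerate characteristic factor --- are exactly the exceptions listed.

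The main obstacle is the bookkeeping around the intrinsic prime: one must pin down, uniformly over the cases $p \mid D$ and $p \nmid D$, when $p$ occurs in $M^U_n$, that it occurs only to the first power, and that $\prod_{z_U(q)=n} q^{\,v_q(U_n)}$ is coprime to $p$, so that the division by $p$ is legitimate modulo $n/p$. The residual family $n = 2^k \cdot 3$ with $2 \nmid PQ$ must be handled by hand, using the explicit congruences for $M^U_6$, $M^U_{12}$, and $M^U_{2^k\cdot 3}$ recorded in Corollary \ref{modn-thm}\ref{modn1}; these supply the extra factor $\left(\frac{D}{2}\right)$ appearing in the $k \ge 3$ formula and confirm that no clean statement survives for $k = 1, 2$.
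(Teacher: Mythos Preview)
Your proposal is correct and follows essentially the same route as the paper: factor $|M^U_n|$ as the product of characteristic-prime contributions times a possible intrinsic prime $p$ (using Proposition~\ref{vpM-thm} and the size bound $z_U(q)\le q+1$ to pin the intrinsic prime down to $p$, with the sole escape $q=2$, $p=3$), convert each characteristic factor $q$ to $\left(\frac{D}{q}\right)$ modulo $n$ via the law of apparition, feed in the congruences of Theorem~\ref{modpk-thm} and Corollary~\ref{modn-thm}, and conclude by the $\pm1$ argument once the effective modulus exceeds $2$. The paper organizes the casework by $\partial_p(n)=1$ versus $\partial_p(n)>1$ rather than by ``intrinsic or not'', but the substance and the list of exceptions coincide.
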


Here it is worthwhile to illustrate by way of concrete examples the constraints imposed by this result. We consider the Fibonacci numbers $F = U(1,-1)$. It is clear that all sign considerations in the preceding corollary vanish in this case (and more generally if $D > 0$).

Note that a prime number satisfying $v_p(F_{z_F(p)}) > 1$ is called a Wall-Sun-Sun prime (see \cite{Sun1}). It is not known whether or not any such primes exist, although it has been established that there are no Wall-Sun-Sun primes smaller than $9.7 \times10^{14}$ (see \cite{Dor}). If every characteristic factor of $F_n$ is smaller than this bound, or under the hypothesis that no Wall-Sun-Sun primes exist, then all of the statements about multiplicities in the following corollary can be deleted.

\begin{corollary} \label{fib-cor}
Let $p ,n \geq 1$, with $p$ prime.
\begin{enumerate}[label = (\alph*)]
    \item \label{fib1} If $p = 5$ or $\left(\frac{p}{5} \right)=1$, then $F_{p^k}$ has an even number, possibly zero, of characteristic factors satisfying $\left(\frac{q}{5} \right) = -1$, including multiplicities, for all $k \geq 1$. The smallest prime power $n=p^k$ such that $F_n$ has a nonzero even number of characteristic factors with negative Legendre symbol is $361=19^2$, with characteristic factors
    \begin{align*}
        &6567762529, 1196762644057, 3150927827816930878141597, \\ & \text{and } 12020126510714734783009241.
    \end{align*}
    \item \label{fib2} If $\left(\frac{p}{5} \right)=-1$, then $F_{p^k}$ has an odd number of characteristic factors satisfying $\left(\frac{q}{5} \right) = -1$, including multiplicities, for all $k \geq 1$, with the exception of $F_2 = 1$; in particular, there is always at least one.
    \item \label{fib4} If $n = 2^i 3^j$ with both $i, j \geq 1$, then $F_{n}$ has an odd number of characteristic factors $\left(\frac{q}{5} \right) = -1$ if and only if $i = 2$, $j > 1$ or $i > 2, ~j=1$. The first occurrence among such indices of a nonzero even number of characteristic factors with negative Legendre symbols is at $F_{216}$, which has characteristic factors $6263$ and $177962167367$.
    \item \label{fib5} If $n = 2^i 3^j 5^k$ with $k \geq 1$, $i, j \geq 0$, then $F_n$ has an even number of characteristic factors satisfying $\left( \frac{q}{5} \right) = -1$. In fact, there are no indices $n \leq 1000$ divisible by $5$ such that $F_n$ has a characteristic factor with negative Legendre symbol.
    \item  \label{fib6} If $p > 5$ is the largest prime dividing $n$, then $F_n$ has an odd number of characteristic factors $\left(\frac{q}{5} \right)=-1$, including multiplicities, if and only if $\left(\frac{p}{5} \right) = -1$ and $\partial_p(n)=1$ or $\partial_p(n) = z_F(p)$.
\end{enumerate}
\end{corollary}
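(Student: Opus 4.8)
The plan is to specialize Corollary~\ref{mult-cor} to $U = F = U(1,-1)$, for which $D = P^2 - 4Q = 5$, and to read off each parity assertion. Two simplifications are immediate. Since $D = 5 > 0$, the remark preceding the corollary gives $\mathrm{sign}(M^U_n) = 1$ for every $n$, so all sign factors disappear. And because $5 \equiv 1 \pmod 4$, quadratic reciprocity (with the Kronecker convention at $q = 2$) gives $\left(\frac{D}{q}\right) = \left(\frac{5}{q}\right) = \left(\frac{q}{5}\right)$ for every prime $q$; in particular $\left(\frac{5}{5}\right) = 0$, so the characteristic factor $q = 5$---which occurs only at $n = z_F(5) = 5$---never contributes to the count of negative-symbol factors. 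Writing $N(n)$ for the number of characteristic factors $q$ of $F_n$ with $\left(\frac{q}{5}\right) = -1$, counted with multiplicity $v_q(F_n)$, so that $\prod_{z_F(q)=n}\left(\frac{q}{5}\right)^{v_q(F_n)} = (-1)^{N(n)}$ for $n \neq 5$, Corollary~\ref{mult-cor} becomes
\begin{equation*}
(-1)^{N(n)} =
\begin{cases}
\left(\frac{p}{5}\right), & p \neq 5 \text{ and } \partial_p(n) \in \{1,\, z_F(p)\}, \\
1, & \text{otherwise},
\end{cases}
\end{equation*}
valid outside the exceptional indices listed there. I use throughout that $z_F(2) = 3$, $z_F(3) = 4$, and $z_F(5) = 5$.

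Parts~\ref{fib1}, \ref{fib2}, and \ref{fib6} follow almost directly. For \ref{fib1} and \ref{fib2} we have $n = p^k$, so $\partial_p(n) = 1$ and, when $p \neq 5$, the identity gives $(-1)^{N(p^k)} = \left(\frac{p}{5}\right)$; thus $N$ is even when $\left(\frac{p}{5}\right) = 1$ and odd when $\left(\frac{p}{5}\right) = -1$. When $p = 5$ we have $p \mid D$, so the second case applies for $k \geq 2$, giving $N$ even, while the excluded index $n = 5$ is handled by hand: $F_5 = 5$ has the single characteristic factor $5$, of symbol $0$, so $N(5) = 0$. The sole genuine exception is $n = p = 2$ in \ref{fib2} (excluded from Corollary~\ref{mult-cor}(a)), where $F_2 = 1$ has no characteristic factors and $N(2) = 0$, contrary to the generic parity, exactly as stated. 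For \ref{fib6}, with $p > 5$ the largest prime factor, none of the exceptional indices of Corollary~\ref{mult-cor} can arise ($P = 1$ rules out $p \mid P$, and $p > 5$ rules out $n = 2$, $n = p \mid D$, and $n = 2^k\cdot 3$), so $N(n)$ is odd precisely when the first case holds and $\left(\frac{p}{5}\right) = -1$.

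For \ref{fib5}, the hypothesis $5 \mid n$ forces the largest prime factor to be $p = 5 \mid D$, so the second case of the identity applies and $N(n)$ is even; again the only excluded index $n = 5$ gives $N(5) = 0$. Part~\ref{fib4} is the delicate one. Here $p = 3$ with $z_F(3) = 4$, and for $j \geq 2$ the identity applies unexceptionally: $\partial_3(n) = 2^i$ equals $z_F(3) = 4$ exactly when $i = 2$, giving $(-1)^{N} = \left(\frac{3}{5}\right) = -1$ and hence $N$ odd, while every other $i$ lands in the second case with $N$ even---this matches ``$i = 2,\ j > 1$''. For $j = 1$ the family $n = 2^k\cdot 3$ is exceptional under the clause $2 \nmid PQ$ of Corollary~\ref{mult-cor}(b), whose explicit formula there gives $(-1)^{N(2^k\cdot 3)} = \left(\frac{D}{2}\right) = \left(\frac{5}{2}\right) = -1$ for $k \geq 3$, so $N$ is odd, matching ``$i > 2,\ j = 1$''; the two remaining indices are settled directly, for since $z_F(2) = 3$ and $z_F(3) = 4$, neither $F_6 = 8$ nor $F_{12} = 144$ possesses a characteristic factor, whence $N(6) = N(12) = 0$. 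The concrete instances---$F_{361}$ in \ref{fib1}, $F_{216}$ in \ref{fib4}, and the search over $n \leq 1000$ in \ref{fib5}---are confirmed by explicit factorization.

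The main obstacle is the careful bookkeeping around the exceptional indices---above all the family $n = 2^k\cdot 3$ of part~\ref{fib4}, where one must correctly read off the Kronecker value $\left(\frac{5}{2}\right) = -1$ and separately dispose of $n = 6, 12$---together with tracking the vanishing symbol at $q = 5$ so that the prime $5$ is never miscounted among the negative-symbol characteristic factors. Beyond these, every assertion is a direct translation of Corollary~\ref{mult-cor} through quadratic reciprocity and the vanishing of signs for $D > 0$.
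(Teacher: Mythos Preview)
Your proposal is correct and follows exactly the approach the paper intends: the paper gives no separate proof of Corollary~\ref{fib-cor}, presenting it as a direct illustration of Corollary~\ref{mult-cor} for $U=F$ with the remark that sign considerations vanish since $D=5>0$. Your derivation---specializing via $\left(\frac{5}{q}\right)=\left(\frac{q}{5}\right)$, handling the contribution of $q=5$, and disposing of the low exceptional indices $n=2,5,6,12$ by direct inspection of $F_n$---is precisely the unpacking the paper leaves implicit.
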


\section{A Conjecture} \label{sec2.5}

Corollary \ref{mult-cor} suggests the presence of a Chebyshev-like bias for the entry points of primes in Lucas sequences, in particular real regular Lucas sequences, which we formalize as a conjecture, after a brief discussion of what is known about the distribution behavior of these entry points. It is convenient to set $z_U(p) = \infty$ for primes dividing $Q$ but not $P$, so as to obtain a map
\begin{equation*}
z_U: \mathbb{P} \longrightarrow \mathbb{N} ~\cup ~ \left\{ \infty \right\}
\end{equation*}
defined on the set $\mathbb{P}$ of all primes. A few basic observations: the fact that every prime except those dividing $Q$ but not $P$ eventually appears in $U$ means that the set
\begin{equation*}
\left\{p \in \mathbb{P}: z_U(p) < \infty \right\}
\end{equation*}
has finite complement in $\mathbb{P}$; similarly, Carmichael's theorem and its extension to negative discriminants by Bilu, Hanrot, and Voutier (see \cite{Car} and \cite{Bilu}) show that the image $z_U(\mathbb{P})$ is cofinite in $\mathbb{N}$.

Lagarias showed (\cite{Lag}) that the set
\begin{equation*}
\left\{p \in \mathbb{P}: z_F(p) \text{ is even} \right\}
\end{equation*}
has asymptotic density $\frac{2}{3}$ in $\mathbb{P}$; another way to say the same thing is that $\frac{2}{3}$ of all primes divide some term of $L = V(1,-1)$. Subsequently, Cubre and Rouse proved (\cite{Cub}) a conjecture due to Bruckman and Anderson (see \cite{Bruck}), extending this result to establish asymptotic densities for sets of the form
\begin{equation*}
\left\{p \in \mathbb{P}: z_F(p) \equiv 0 \pmod{m} \right\}
\end{equation*}
for arbitrary integers $m \geq 1$.

In order to state our conjecture, we introduce some notation. Fix any Lucas sequence $U$, and let
\begin{align*}
R_U &= \{p \in \mathbb{P}: \left(\frac{D}{p} \right) = 1 \}, \\
N_U &= \{p \in \mathbb{P}: \left(\frac{D}{p} \right) = -1 \}.
\end{align*}
For $x > 0$, define
\begin{align*}
Z_U(x) = \left\{p \in \mathbb{P}: z_U(p) \leq x \right\} = \{p \in \mathbb{P}: \prod_{1 \leq n \leq  x }U_n \equiv 0 \pmod{p} \},
\end{align*}
and set
\begin{align*}
Z^R_U(x) &= Z_U(x) ~ \cap ~ R_U, \\
Z^N_U(x) &= Z_U(x) ~ \cap ~ N_U.
\end{align*}
Finally, we define the bias term
\begin{equation*}
B_U(x) = \#\{1 \leq n \leq x: \#Z^N_U(n) < \#Z^R_U(n) \}.
\end{equation*}

\begin{conjecture}
\begin{enumerate}[label = (\alph*)]
\item[]
\item (Weak) If $U$ is a regular Lucas sequence with $D > 0$, then
\begin{equation*}
    B_U(x)/x \underset{x \rightarrow \infty}\longrightarrow 1.
\end{equation*}
\item (Strong) If $U$ is a regular Lucas sequence with $D > 0$, then
\begin{align*}
    \#Z^N_U(n) < \#Z^R_U(n)    
\end{align*}
for all but finitely many $n \geq 1$; for example, we conjecture that 
\begin{align*}
    \#Z^N_F(n) < \#Z^R_F(n)    
\end{align*}
for all $n > 36$.
\end{enumerate}
\end{conjecture}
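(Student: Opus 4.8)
The plan is to study the single quantity
\[
S_U(x) = \#Z^R_U(x) - \#Z^N_U(x),
\]
and to show that $S_U(x) > 0$ for a set of $x$ of density one (weak form), respectively for all sufficiently large $x$ (strong form). Organizing the count by entry point, I would write
\[
S_U(x) = \sum_{1 \leq d \leq x}\left(a^+_U(d) - a^-_U(d)\right),
\]
where $a^{\pm}_U(d) = \#\{p : z_U(p) = d, \left(\frac{D}{p}\right) = \pm 1\}$ counts the primitive prime divisors of $U_d$ of each quadratic type. Since $D > 0$, every $M^U_n > 0$, so $\text{sign}(M^U_n) = 1$ and the sign factors in Corollary \ref{mult-cor} disappear; that corollary then pins down, level by level, the parity of $a^-_U(d)$ (ignoring the nuisance of multiplicities at putative Wall--Sun--Sun primes): it is odd precisely when $\left(\frac{D}{p}\right) = -1$ at the largest prime $p \mid d$ and $\partial_p(d) \in \{1, z_U(p)\}$, and even otherwise. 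This local parity information is the unconditional backbone of the argument, but by itself it controls only the residue of $a^-_U(d)$ modulo $2$, not the cumulative imbalance; the bias must come from the \emph{sizes} of the $a^{\pm}_U(d)$.

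For the sizes I would pass to the index. Recall $z_U(p) \mid p - \left(\frac{D}{p}\right)$, so a prime of type $R_U$ contributes to $Z^R_U(x)$ exactly when $(p-1)/i_U(p) \leq x$, while a prime of type $N_U$ contributes to $Z^N_U(x)$ exactly when $(p+1)/i_U(p) \leq x$, where $i_U(p)$ is the index of $\langle \alpha/\beta\rangle$ in $\mathbb{F}_p^{\times}$, respectively in the norm-one subgroup of $\mathbb{F}_{p^2}^{\times}$. Each counting function is thus a sum over indices $i$ of a count of primes subject to a Kummer-type splitting condition in $\mathbb{Q}(\zeta_{id}, (\alpha/\beta)^{1/(id)}, \sqrt{D})$, refined by the quadratic character $\left(\frac{D}{\cdot}\right)$. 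The plan is to evaluate $\#Z^R_U(x)$ and $\#Z^N_U(x)$ asymptotically by the effective Chebotarev machinery already used by Lagarias \cite{Lag} and by Cubre and Rouse \cite{Cub} to compute the densities of $\{p : m \mid z_U(p)\}$, but now carrying the extra quadratic constraint and extracting a \emph{second} term beyond the leading one. The direction of the bias should come from a genuine entanglement between the quadratic character and the parity of the index: writing $\alpha/\beta = \alpha^2/Q$, one sees that for $p \in R_U$ the element $\alpha/\beta$ has the same quadratic character as $Q$, which forces a congruence obstruction on $i_U(p)$ that is absent, or reversed, for $p \in N_U$. Quantifying this---computing the degrees of the relevant Kummer extensions with $\sqrt{D}$ and $\sqrt{Q}$ adjoined and comparing the two index distributions---is what should produce a strictly positive net coefficient in favour of $R_U$, consistent with the Fibonacci data, where the early deficit caused by the small non-residue primes $2, 3, 7, 13, \dots$ is only overcome once $n$ exceeds $36$.

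The hard part is twofold. First, even the weak form requires a \emph{provably} positive bias coefficient, and the honest extraction of the secondary term seems to demand GRH for the Dedekind zeta functions (equivalently the Artin $L$-functions) of the tower $\mathbb{Q}(\zeta_n, \sqrt D, \sqrt Q, (\alpha/\beta)^{1/m})$, together with a Rubinstein--Sarnak-type linear-independence hypothesis on the relevant zeros to guarantee that the bias is not washed out on a positive-density set of scales. Second, the strong form is more delicate still: in the classical Chebyshev setting the analogous race changes leader infinitely often, by Littlewood's theorem, so ``all but finitely many $n$'' can hold here only if the present bias is of strictly larger order of magnitude than its own fluctuations. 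Establishing that would require an unconditional power-saving error term in the two counting functions---precisely the ingredient that is missing---so I expect the weak form to be attainable under GRH and the linear-independence hypothesis, while the strong form, though strongly supported numerically, is likely to remain open pending new input on the oscillation of these prime-counting differences.
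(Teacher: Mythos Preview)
The statement you are addressing is a \emph{conjecture}, not a theorem, and the paper does not claim to prove it. After stating the conjecture, the authors offer only heuristic motivation (via Corollary~\ref{mult-cor}) and numerical evidence (Table~\ref{tab1}, Figure~\ref{fig1}) for the Fibonacci case; they explicitly write that ``it would be valuable to have both data and theory'' on these questions. There is therefore no paper proof to compare your proposal against.

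Your write-up is not a proof either, and to your credit you do not pretend otherwise: you correctly identify that the parity information coming from Corollary~\ref{mult-cor} constrains $a^-_U(d) \bmod 2$ but cannot by itself control the cumulative sign of $S_U(x)$; you correctly locate the needed input in a secondary term of a Chebotarev count over Kummer towers in the style of \cite{Lag} and \cite{Cub}; and you correctly flag that even the weak form would seem to require GRH plus a linear-independence hypothesis, while the strong form is of a different nature from the classical Chebyshev race and would need control of fluctuations that is presently unavailable. As a research outline this is sensible, but it is an outline, not an argument, and at several points it is optimistic: the entanglement you describe between $\sqrt{D}$, $\sqrt{Q}$, and the Kummer generators has not been shown to produce a bias of the claimed sign, and the passing remark that $M^U_n > 0$ whenever $D > 0$ is not literally true without a normalization such as $P > 0$ (take $P = -1$, $Q = -1$, for which $U_2 = -1$), though the paper itself is equally casual on this point. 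In short, what you have written is a reasonable sketch of how one might attack the conjecture, consistent with the paper's own suggestion that this is open; it is not, and the paper does not contain, a proof.
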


It would be valuable to have both data and theory related to the sizes of the sets $Z^R_U(x)$, $Z^N_U(x)$, and their ratios and differences, across various Lucas sequences, as well as various related quantities, such as the total number of primes, with positive or negative Kronecker symbol respectively, appearing in a Lucas sequence at indices $n \leq x$, the number of indices $n \leq x$ admitting a characteristic factor with positive or negative Kronecker symbol respectively, and so on. As a small first step in this direction, we summarize in Table \ref{tab1} and Figure \ref{fig1} some relevant data for the first thousand Fibonacci numbers. The factorizations of these numbers are available at Blair Kelly's page of Fibonacci and Lucas number factorizations (see \cite{Kelly}), which also includes partial factorizations (and some complete factorizations) for all indices up to $n < 10000$ (see also \cite{Alf1} and \cite{Alf2} for tables of Fibonacci entry points for all primes $p < 100000$).

\noindent
\begin{longtable}{|c|c|c|}
\caption{Bias in Fibonacci entry points.} \label{tab1} \\ \hline
\textbf{Indices} & \multicolumn{2}{c|}{\textbf{Entry Point Behavior}} \\ \cline{2-3}
         & \(\# Z_F^R(n)\) & \(\# Z_F^N(n)\) \\ \hline
\endfirsthead

\multicolumn{3}{c}%
{{\bfseries Table \thetable\ (Continued)}} \\ \hline
\textbf{Indices} & \multicolumn{2}{c|}{\textbf{Entry Point Behavior}} \\ \cline{2-3}
         & \(\#Z_F^R(n)\) & \(\#Z_F^N(n)\) \\ \hline
\endhead

\hline \multicolumn{3}{r}{{Continued on the next page.}} \\ \hline
\endfoot

\hline
\endlastfoot

\(n = 1\)        & 0       &  0 \\ \hline
\(n = 2\)        & 0       &  0 \\ \hline
\(n = 3\)        & 0       &  1 \\ \hline
\multicolumn{1}{|c|}{\(4 \le n \le 28\)}  & \multicolumn{2}{c|}{\(\#Z_F^R(n) < \#Z_F^N(n)\)} \\ \hline
\(n = 29\)       & 13      & 13 \\ \hline
\(n = 30\)       & 14      & 13 \\ \hline
\(n = 31\)       & 14      & 15 \\ \hline
\(n = 32\)       & 14      & 16 \\ \hline
\(n = 33\)       & 15      & 16 \\ \hline
\(n = 34\)       & 16      & 16 \\ \hline
\(n = 35\)       & 17      & 16 \\ \hline
\(n = 36\)       & 17      & 17 \\ \hline
\multicolumn{1}{|c|}{\(37 \le n \le 999\)} & \multicolumn{2}{c|}{\(\# Z^R_F(n) > \#Z^N_F(n)\)} \\ \hline
\(n = 1000\)     & 1970    & 959 \\ \hline

\end{longtable}

\begin{figure}[ht]
\centering
\includegraphics[width=\linewidth]{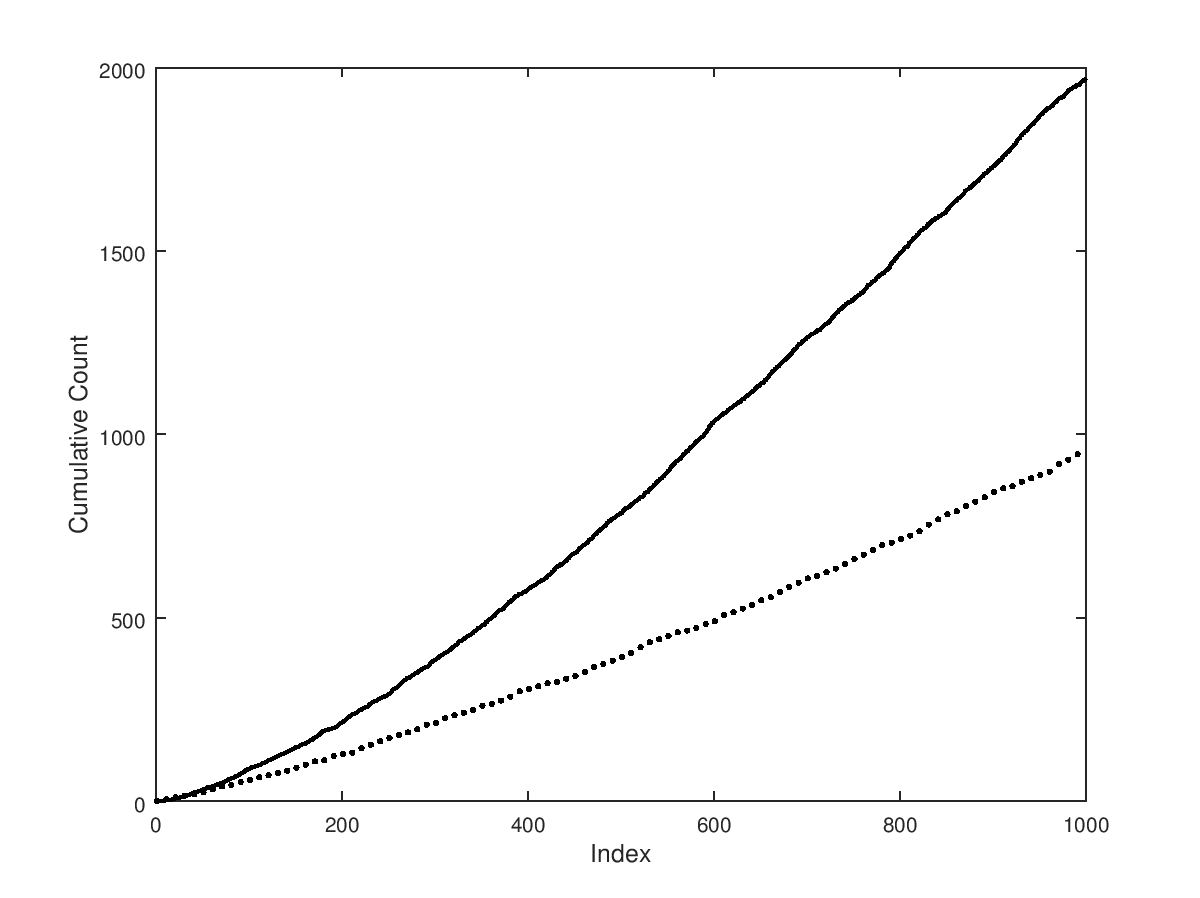} 
\caption{Bias in Fibonacci entry points. Here, $\# Z^R_F(n)$ is the solid line, $\# Z^N_F(n)$ the dotted line.}  
\label{fig1} 
\end{figure}

\section{Auxiliary Results} \label{sec3}
In this section, we gather some auxiliary results that we will need for the proofs of the theorems, starting with a few facts aboutthe sequences $M^U, M^V$.

\begin{proposition}[Doubling formula, \cite{Ross}] \label{dbl-thm}
For $n \geq 1$,
\begin{equation*} \label{dbleq0}
    M^{U}_{2n} = 
    \begin{cases}
        
        M^{V}_{n}, &\text{ \emph{if} } n \text{ \emph{is odd}.} \\
        M^{V}_{n}M^{U}_{n}, &\text{ \emph{if} } n \text{ \emph{is even},}
    \end{cases}
\end{equation*}
\end{proposition}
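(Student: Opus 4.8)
The plan is to reduce everything to the single Lucas identity $U_{2M}=U_M V_M$, which drops out of the Binet forms in one line,
\[
U_M V_M=\frac{\alpha^M-\beta^M}{\alpha-\beta}\,(\alpha^M+\beta^M)=\frac{\alpha^{2M}-\beta^{2M}}{\alpha-\beta}=U_{2M},
\]
and then to carry out a careful bookkeeping of the divisor products defining $M^U$ and $M^V$. Throughout I would write $n=2^a m$ with $m$ odd and $a\ge 0$, so that $2n=2^{a+1}m$.

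First I would expand $M^U_{2n}=\prod_{d\mid 2n}U_d^{\mu(2n/d)}$ and discard every divisor whose Möbius factor vanishes. Writing $d=2^b e$ with $e\mid m$, the cofactor $2n/d=2^{\,a+1-b}(m/e)$ is squarefree precisely when $m/e$ is squarefree and $b\in\{a,a+1\}$; and since $m/e$ is odd, $\mu(2\cdot m/e)=-\mu(m/e)$. Grouping the two surviving powers of $2$ and applying $U_{2^{a+1}e}=U_{2^a e}V_{2^a e}$ collapses the product to the compact identity
\[
M^U_{2n}=\prod_{e\mid m}\Bigl(\tfrac{U_{2^{a+1}e}}{U_{2^a e}}\Bigr)^{\mu(m/e)}=\prod_{e\mid m}V_{2^a e}^{\mu(m/e)},
\]
valid for every $n\ge 1$.

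The proof then finishes by matching this against the defining products. When $a=0$ (so $n=m$ is odd) the right-hand side is literally $\prod_{e\mid n}V_e^{\mu(n/e)}=M^V_n$, which is the first case. When $a\ge 1$ (so $n$ is even) I would run the identical squarefree-cofactor analysis on $M^V_n=\prod_{d\mid n}V_d^{\mu(n/d)}$: here only $b\in\{a-1,a\}$ survive, giving $M^V_n=\prod_{e\mid m}V_{2^{a-1}e}^{-\mu(m/e)}V_{2^a e}^{\mu(m/e)}$, while the same identity, now read at the even integer $n$ itself, yields $M^U_n=\prod_{e\mid m}V_{2^{a-1}e}^{\mu(m/e)}$. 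Multiplying these, the $V_{2^{a-1}e}$ factors cancel and I recover $M^V_n M^U_n=\prod_{e\mid m}V_{2^a e}^{\mu(m/e)}=M^U_{2n}$, which is the second case.

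The only real work — and the single place to be careful — is the index bookkeeping: correctly identifying which two $2$-adic levels $b$ survive the squarefree condition in each of the three products (for $M^U_{2n}$, $M^V_n$, and $M^U_n$) and tracking the sign flip $\mu(2k)=-\mu(k)$ for odd $k$. There is no structural obstacle here; an off-by-one error in the exponent of $2$ is essentially the only failure mode, and the concrete check at $n=2$ (where $M^U_4=V_2=M^V_2 M^U_2$) is a convenient guard against it. One could instead route the argument through the cyclotomic representation $M^U_n=\beta^{\varphi(n)}\Phi_n(\alpha/\beta)$ together with the standard relations $\Phi_{2n}(X)=\Phi_n(-X)$ for odd $n$ and $\Phi_{2n}(X)=\Phi_n(X^2)$ for even $n$, but the elementary divisor computation above is both shorter and more transparent.
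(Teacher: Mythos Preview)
Your proof is correct. The paper does not actually prove this proposition; it is imported from the companion preprint \cite{Ross} and stated without argument, so there is no in-paper proof to compare against. Your elementary route via $U_{2M}=U_MV_M$ and the $2$-adic bookkeeping of the M\"obius-weighted divisor products is clean, and the reuse of the identity $M^U_{2n}=\prod_{e\mid m}V_{2^a e}^{\mu(m/e)}$ at level $n$ (with $a$ shifted down by one) to produce $M^U_n$ in the even case is a nice touch that avoids redoing any computation. The alternative cyclotomic route you mention, via $M^U_n=\beta^{\varphi(n)}\Phi_n(\alpha/\beta)$ and the relations $\Phi_{2n}(X)=\Phi_n(-X)$ (odd $n$) and $\Phi_{2n}(X)=\Phi_n(X^2)$ (even $n$), is closer in spirit to how the paper manipulates $M^U$ elsewhere, but your divisor-product argument is self-contained and arguably more transparent for this particular identity.
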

\begin{proposition}[\cite{Ross}] \label{vpM-thm}
    If $p$ is prime and $p \nmid (P,Q)$, we have the following valuations.
    \begin{enumerate}[label = (\alph*)]
    \item \label{vpM0} If $p \mid Q$, then $v_p(M^U_n)=0$ for all $n \geq 1$.
    \item \label{vpM1} If $p \mid D$, then
    \begin{equation*} \label{vpMeq1}
        v_p(M^U_n) =
        \begin{cases}
         v_p(U_p), &\text{\emph{if} } n = p, \\
         1, &\text{\emph{if} } n = p^k,~ k >1, \\
         0, &\text{\emph{otherwise}}.
        \end{cases}
    \end{equation*}
    \item \label{vpM2} If $p \nmid QD$, then
    \begin{equation*}
        v_p(M^U_n) =
        \begin{cases}
            v_p(U_{z_U(p)}), &\text{\emph{if }} n=z_U(p), \\
            v_p(U_{pz_U(p)})-v_p(U_{z_U(p)}), &\text{\emph{if }} n = pz_U(p), \\
            1, &\text{\emph{if }} n =p^kz_U(p),~k >1, \\
            0, &\text{\emph{otherwise.}}
        \end{cases}
    \end{equation*}
    \end{enumerate}
    If $p \mid (P, Q)$, then we have the following cases.
    \begin{enumerate} [label = (\alph*)] \setcounter{enumi}{3}
        \item \label{vpm3} If $v_p(Q) \geq 2v_p(P)$, then
        \begin{equation*}
            v_p(M^U_n)=
            \begin{cases}
                0, &\text{\emph{if} } n = 1, \\
                \varphi(n)v_p(P)+v_p(M^{U^{(p)}}_n), &\text{\emph{if }} n > 1,
            \end{cases}
        \end{equation*}
        where $U^{(p)}=U(\partial_p(P),\partial_p(Q))$.
        \item \label{vpm4} If $2v_p(P) > v_p(Q)$, then 
        \begin{equation*}
            v_p(M^U_n) =
            \begin{cases}
                v_p(P), &\text{\emph{if n = 2}},\\
                \left(\varphi(n)/2\right)v_p(Q)+1, &\text{\emph{if }} n = 2p^k, ~ p \text{\emph{ prime }},~k\geq 1, \\
                \lfloor \varphi(n)/2 \rfloor v_p(Q), &\text{\emph{otherwise,}}
            \end{cases}
        \end{equation*}
        unless $2v_p(P)=v_p(Q)+1$, $p = 2$ or $3$, $n = 2p$ in which case
        \begin{align*}
            v_p(M^U_{2p})=v_p(Q)+1+v_p(\partial_p(P)^2-\partial_p(Q)). 
        \end{align*}
    \end{enumerate}
\end{proposition}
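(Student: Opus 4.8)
\section*{Proof proposal}

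The plan is to work throughout from the cyclotomic expression $M^U_n=\beta^{\varphi(n)}\Phi_n(\gamma)$, valid for $n>1$ with $\gamma=\alpha/\beta$ (and $M^U_1=U_1=1$). Fix a prime $\mathfrak p$ above $p$ in the ring of integers of $K=\mathbb Q(\sqrt D)$ and let $w$ be the $\mathfrak p$-adic valuation normalized to restrict to $v_p$ on $\mathbb Q$; since $M^U_n\in\mathbb Z$ we have $v_p(M^U_n)=w(M^U_n)$, and the identity splits as
\[ v_p(M^U_n)=\varphi(n)\,w(\beta)+w\big(\Phi_n(\gamma)\big). \]
Everything then reduces to two inputs: the valuation $w(\beta)$ and the residue $\bar\gamma$ of $\gamma$ modulo $\mathfrak p$, which I read off from the Newton polygon of $X^2-PX+Q$ at $p$; and the valuation $w(\Phi_n(\gamma))$, which I control by combining the value $\Phi_n(\bar\gamma)$ with a lifting-the-exponent (LTE) analysis of $\Phi_n$ at $\gamma$.

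When $p\nmid(P,Q)$ the root $\beta$ is a $\mathfrak p$-adic unit, so $w(\beta)=0$ and $v_p(M^U_n)=w(\Phi_n(\gamma))$. Case \ref{vpM0} ($p\mid Q$) is immediate: here $\bar\gamma=0$ and $\Phi_n(0)=1$ for all $n>1$, so every valuation vanishes (equivalently $U_n\equiv P^{n-1}\not\equiv0\pmod p$). In case \ref{vpM1} ($p\mid D$) the polynomial has a repeated root mod $\mathfrak p$, so $\bar\gamma=1$; since $\Phi_n(1)$ is a $p$-adic unit unless $n=p^k$, when it equals $p$, only $n=p^k$ contributes, and the cyclotomic LTE applied to $\gamma\equiv1$ gives $w(\Phi_{p^k}(\gamma))=1$ for $k\ge2$, while $n=p$ is governed directly by $M^U_p=U_p$. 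In case \ref{vpM2} ($p\nmid QD$) the residue $\bar\gamma$ is a unit of exact order $z=z_U(p)$ with $p\nmid z$, and $\mathfrak p\mid\Phi_n(\gamma)$ precisely when $n\in\{z,zp,zp^2,\dots\}$; the values at $n=z$ and $n=zp$ I recover by Möbius-telescoping $U_m=\prod_{d\mid m}M^U_d$, isolating $v_p(M^U_z)=v_p(U_z)$ and $v_p(M^U_{zp})=v_p(U_{zp})-v_p(U_z)$ (which leaves room for Wall--Sun--Sun behaviour at $n=z$ and for the anomalous doubling step when $p=2$), and for $k\ge2$ LTE again pins $w(\Phi_{zp^k}(\gamma))=1$.

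For $p\mid(P,Q)$ the Newton polygon dichotomy is exactly the hypothesis split. When $2v_p(P)>v_p(Q)$ (case \ref{vpm4}) the polygon is a single segment, both roots have valuation $v_p(Q)/2$, so $w(\beta)=v_p(Q)/2$ and the bulk term is $\varphi(n)v_p(Q)/2=\lfloor\varphi(n)/2\rfloor v_p(Q)$ for $n>2$; moreover $\gamma\equiv-1$, and since $\Phi_n(-1)$ is a $p$-adic unit for every $n>2$ outside the family $n=2p^k$ (where it equals $p$), the correction $w(\Phi_n(\gamma))$ vanishes apart from the $+1$ at $n=2p^k$, with $n=2$ handled directly by $M^U_2=P$. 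When $v_p(Q)\ge2v_p(P)$ (case \ref{vpm3}) I instead exploit the homogeneity $U_n(P,Q)=p^{v_p(P)(n-1)}U_n(\hat P,\hat Q)$ obtained by scaling the roots by $p^{-v_p(P)}$, where $\hat P=\partial_p(P)$ and $\hat Q=Q/p^{2v_p(P)}$; taking $v_p$ of the Möbius product collapses the scaling factor to $\varphi(n)v_p(P)$ and reduces the remaining term $v_p(M^{U^{(p)}}_n)$ to the already-treated cases \ref{vpM0}--\ref{vpM2} for $\hat U$, which is again nondegenerate because $U$ is and which falls under case \ref{vpM0} precisely when $v_p(Q)>2v_p(P)$, forcing the correction to vanish there.

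The main obstacle is the exceptional sub-case of \ref{vpm4}, namely $2v_p(P)=v_p(Q)+1$ with $p\in\{2,3\}$ and $n=2p$. Here $v_p(Q)$ is odd, so $K$ is ramified at $p$, the valuation $w$ takes half-integer values, and $v_{\mathfrak p}(\gamma+1)=v_p(P)-v_p(Q)/2$ attains its smallest positive value; the crude estimate $\Phi_{2p}(\gamma)\equiv\Phi_{2p}(-1)$ is then too lossy, and I must expand $\Phi_{2p}$ to second order about $-1$ to capture the extra cancellation, which I expect to produce exactly the reduced-discriminant contribution $v_p\big(\partial_p(P)^2-\partial_p(Q)\big)$ in the statement. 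A cleaner route I would try first is the doubling formula of Proposition \ref{dbl-thm} (which for odd $p$ gives $M^U_{2p}=M^V_p$), rewriting the target as $v_p(V_p)-v_p(P)$ and letting me attack $v_p(\alpha^p+\beta^p)$ directly; in either approach the delicate points are the ramification bookkeeping and the small-prime ($p=2,3$) failures of the clean LTE identities, which also account for the scattered $p=2$ refinements appearing throughout the statement.
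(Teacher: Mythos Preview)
This proposition is not proved in the present paper at all: it is quoted as an auxiliary result from the companion preprint \cite{Ross}, with no argument given here. So there is no ``paper's own proof'' to compare your sketch against.

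That said, your plan is the natural one and matches in spirit how the paper handles the related congruence results (Theorem~\ref{modpk-thm}): write $M^U_n=\beta^{\varphi(n)}\Phi_n(\alpha/\beta)$, pass to a prime $\mathfrak p$ of $\mathbb Q(\sqrt D)$ above $p$, and read everything off from the residue of $\gamma=\alpha/\beta$ together with the known values $\Phi_n(0)$, $\Phi_n(1)$, $\Phi_n(-1)$ and an LTE-style refinement at the exceptional $n$. Your treatment of cases \ref{vpM0}--\ref{vpM2} and the generic part of \ref{vpm4} is correct, and you have correctly isolated the delicate sub-case of \ref{vpm4} (namely $2v_p(P)=v_p(Q)+1$, $p\in\{2,3\}$, $n=2p$), where the first-order approximation $\Phi_{2p}(\gamma)\equiv\Phi_{2p}(-1)$ is too crude and one must go to second order to see the $v_p(\partial_p(P)^2-\partial_p(Q))$ contribution.

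One genuine gap to watch in case \ref{vpm3}: your homogeneity reduction produces $U_n(P,Q)=p^{(n-1)v_p(P)}U_n(\hat P,\hat Q)$ with $\hat P=\partial_p(P)$ and $\hat Q=Q/p^{2v_p(P)}$, and you then silently identify this auxiliary sequence with the paper's $U^{(p)}=U(\partial_p(P),\partial_p(Q))$. These coincide only when $v_p(Q)=2v_p(P)$; when $v_p(Q)>2v_p(P)$ strictly, your $\hat Q$ still has $p\mid\hat Q$ (so your correction vanishes by case \ref{vpM0}), whereas the paper's $\partial_p(Q)$ is coprime to $p$, so $U^{(p)}$ lands in case \ref{vpM1} or \ref{vpM2} and $v_p(M^{U^{(p)}}_n)$ is \emph{not} identically zero --- indeed $U^{(p)}$ need not even be nondegenerate (try $P=3$, $Q=27$, $p=3$). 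Your argument therefore does not prove the formula exactly as stated; either the statement intends $\hat Q=Q/p^{2v_p(P)}$ rather than $\partial_p(Q)$, or an extra step is needed to reconcile the two, and you should flag this explicitly rather than elide it.
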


We recall also a few basic facts concerning cyclotomic polynomials.

\begin{lemma}[\cite{Leb}, \cite{Lehm}, \cite{Wash}] \label{cycl-lem}
Let $\Phi_n \in \mathbb{Z}[X]$ be the $n$-the cyclotomic polynomial $(n \geq 1)$.
\begin{enumerate} [label = (\alph*)]
    \item \label{cycl1} Fix any positive integers $p, k, n \geq 1$ with $p$ prime and $(p,n)=1$. Then
    \begin{align*}
        \Phi_{p^kn}(X)=\Phi_n(X^{p^k})/\Phi_n(X^{p^{k-1}});
    \end{align*}
    in particular, if $n = 1$, then
    \begin{align*}
        \Phi_{p}(X) &=\sum_{j=0}^{p-1}X^j, \\
        \Phi_{p^k}(X) &= \Phi_p(X^{p^{k-1}}).
    \end{align*}
    \item \label{cycl1.5} For $n > 1$, $\Phi_n(X^{-1})=X^{-\varphi(n)}\Phi_n(X)$.
    \item If $\mathcal{K}=\mathbb{F}_q$ is a finite field, $\zeta \in \mathcal{K}^{\times}$, and $p = \text{\emph{char} }\mathcal{K}$ does not divide $n \geq 1$, then $\Phi_n(\zeta)=0$ if and only if $n = \text{\emph{ord}}_{\mathcal{K}^{\times}}(\zeta)$ is the order of $\zeta$ in $\mathcal{K}^{\times}$; in other words, if and only if $\zeta$ is a primitive $n$-th root of unity over $\mathbb{F}_{p}$ (here, and throughout, we interpret $\Phi_n$ as its canonical image in $\mathcal{K}[X]$).
\end{enumerate}
\end{lemma}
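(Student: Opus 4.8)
The plan is to derive all three parts from the single fundamental identity $X^n-1=\prod_{d\mid n}\Phi_d(X)$ together with the standard description of the roots of $\Phi_m$ as the primitive $m$-th roots of unity; parts (a) and (b) I would argue in characteristic zero (over $\mathbb{C}$, equivalently $\overline{\mathbb{Q}}$) by tracking the orders of roots of unity, and part (c) by reducing this factorization modulo $p$ and invoking separability.

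For part (a), I would first prove the cleaner, unfactored identity $\Phi_n(X^{p^k})=\prod_{j=0}^{k}\Phi_{p^jn}(X)$ and then divide. A number $\zeta$ is a root of $\Phi_n(X^{p^k})$ exactly when $\zeta^{p^k}$ is a primitive $n$-th root of unity; writing $m=\mathrm{ord}(\zeta)=p^a b$ with $p\nmid b$ and using $\mathrm{ord}(\zeta^{p^k})=m/\gcd(m,p^k)=p^{a-\min(a,k)}b=n$ together with $(p,n)=1$, one is forced to $a\le k$ and $b=n$, so $m=p^jn$ for some $0\le j\le k$, and conversely each such order occurs. Since we are in characteristic zero, $\Phi_n(X^{p^k})$ is separable, so it equals exactly $\prod_{j=0}^{k}\Phi_{p^jn}$; the degree identity $p^k\varphi(n)=\sum_{j=0}^{k}\varphi(p^jn)$, which uses $\varphi(p^jn)=\varphi(p^j)\varphi(n)$, confirms no factor is missed. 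Dividing the identity for $k$ by that for $k-1$ gives the stated quotient, and specializing to $n=1$ with $\Phi_1(X)=X-1$ yields both $\Phi_p(X)=(X^p-1)/(X-1)=\sum_{j=0}^{p-1}X^j$ and $\Phi_{p^k}(X)=(X^{p^k}-1)/(X^{p^{k-1}}-1)=\Phi_p(X^{p^{k-1}})$.

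For part (b), the key observation is that $\zeta\mapsto\zeta^{-1}$ permutes the primitive $n$-th roots of unity. Starting from $\Phi_n(X)=\prod_\zeta(X-\zeta)$ and reciprocating, $X^{\varphi(n)}\Phi_n(X^{-1})=\prod_\zeta(1-\zeta X)=\big(\prod_\zeta(-\zeta)\big)\big(\prod_\zeta(X-\zeta^{-1})\big)$; the second product is again $\Phi_n(X)$, while $\prod_\zeta(-\zeta)=\Phi_n(0)$, which equals $1$ for $n>1$. I would obtain $\Phi_n(0)=1$ by induction from $\prod_{d\mid n}\Phi_d(0)=-1$ (the constant term of $X^n-1$), where $\Phi_1(0)=-1$ cancels the global sign and the inductive hypothesis clears the proper divisors. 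This yields $X^{\varphi(n)}\Phi_n(X^{-1})=\Phi_n(X)$, the claimed reciprocal relation.

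For part (c), I would reduce $X^n-1=\prod_{d\mid n}\Phi_d(X)$ modulo $p$ and work in $\mathbb{F}_q[X]$. If $\mathrm{ord}(\zeta)=n$, then $\zeta$ is a root of $X^n-1$, hence of some $\Phi_d$ with $d\mid n$; but $\Phi_d\mid X^d-1$ forces $\zeta^d=1$, so $d<n$ is impossible and $d=n$, giving $\Phi_n(\zeta)=0$. Conversely, $\Phi_n(\zeta)=0$ gives $\zeta^n=1$, so $m=\mathrm{ord}(\zeta)\mid n$; if $m<n$ then $\zeta$ is also a root of some $\Phi_e$ with $e\mid m$, and $\zeta$ being a common root of $\Phi_e$ and $\Phi_n$ with $e\ne n$ contradicts their coprimality. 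The main obstacle—indeed the only place the hypothesis $p\nmid n$ is essential—is precisely this coprimality: the reduction of $X^n-1$ is separable exactly when $p\nmid n$ (its derivative $nX^{n-1}$ must be nonzero and coprime to it), and without separability the distinct $\Phi_d$ could acquire common roots after reduction, breaking both directions. I would therefore isolate this separability step at the outset, after which the order-tracking arguments transfer verbatim from the characteristic-zero case.
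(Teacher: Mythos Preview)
Your proof is correct in all three parts; the root-counting argument for (a), the reciprocal computation with $\Phi_n(0)=1$ for (b), and the separability-based order argument for (c) are all standard and complete. The paper does not actually prove this lemma: it is stated with citations to Lebesgue, Lehmer, and Washington and used as a black box, so there is no in-paper proof to compare your approach against.
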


The proof of Theorem \ref{modpk-thm} essentially amounts to evaluating cyclotomic polynomials modulo prime ideal powers of a quadratic number field. We record here the rational integer version of these calculations, which applies directly to the split prime case, and serves as the template for the arguments in the case of ramified or inert primes. This result is quite straightforward, but we did not find a suitable reference, and present a proof below. For $p, \zeta \in \mathbb{Z}$ with $p$ prime and $(p,\zeta)=1$, we write
\begin{equation*}
    \text{ord}_p(\zeta) = \text{min}(n \geq 1: \zeta^n \equiv 1 \pmod{p})
\end{equation*}
for the multiplicative order of $\zeta$ modulo $p$.

\begin{lemma} \label{ratcon-lem}
    Fix any $p, n, \zeta \in \mathbb{Z}$ with $p$ prime, $n \geq 1$, and $(p,\zeta)=(p,n)=1$.
    \begin{enumerate}[label = (\alph*)]
        \item \label{rc1} If $n \neq \text{\emph{ord}}_p(\zeta)$, then
        \begin{equation*}
            \Phi_{p^kn}(\zeta) \equiv
            \begin{cases}
                1 \pmod{p^{k}}, &\text{\emph{if} } p >2, \text{\emph{} or } p = 2, k = 1, \\
                1 \pmod{p^{k+1}}, &\text{\emph{if} } p =2, ~k >1.
            \end{cases}
        \end{equation*}
        \item \label{rc2} If $n=\text{\emph{ord}}_p(\zeta)$, then
        \begin{equation*}
            \Phi_{p^kn}(\zeta) \equiv
            \begin{cases}
                p \pmod{p^k}, &\text{\emph{if} } p = 2, k = 1, \\
                p \pmod{p^{k+1}}, &\text{\emph{if} } p > 2 \text{\emph{ or }} p=2, ~ k > 1.
            \end{cases}
        \end{equation*}
    \end{enumerate}
\end{lemma}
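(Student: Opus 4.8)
The plan is to reduce everything to the elementary ratios
\[
g_d = \frac{\zeta^{dp^k}-1}{\zeta^{dp^{k-1}}-1}, \qquad d \mid n,
\]
and to separate their behaviour according to whether or not $\zeta^d \equiv 1 \pmod p$. Writing $m = \text{ord}_p(\zeta)$ and combining the recursion of Lemma~\ref{cycl-lem}\ref{cycl1} with the Möbius form $\Phi_n(X) = \prod_{d\mid n}(X^d-1)^{\mu(n/d)}$ (obtained by inverting $X^n-1 = \prod_{d\mid n}\Phi_d(X)$), I would first record
\[
\Phi_{p^kn}(\zeta) = \frac{\Phi_n(\zeta^{p^k})}{\Phi_n(\zeta^{p^{k-1}})} = \prod_{d \mid n} g_d^{\,\mu(n/d)}.
\]
Two facts do all the work: (i) Euler's theorem in the form $\zeta^{p^{k-1}(p-1)} \equiv 1 \pmod{p^k}$, whence $\zeta^{p^k} \equiv \zeta^{p^{k-1}} \pmod{p^k}$; and (ii) the observation that $\zeta^{p^j} \equiv \zeta \pmod p$ for every $j$, so that $\text{ord}_p(\zeta^{p^j}) = m$ for all $j$, which through Lemma~\ref{cycl-lem}(c) governs divisibility by $p$.

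For part~\ref{rc1}, where $n \neq m$, Lemma~\ref{cycl-lem}(c) shows that each $\Phi_n(\zeta^{p^j})$ is a $p$-adic unit, its reduction being nonzero because $\text{ord}_p(\zeta^{p^j}) = m \neq n$. Since $\Phi_n \in \mathbb{Z}[X]$ and $\zeta^{p^k} \equiv \zeta^{p^{k-1}} \pmod{p^k}$, I obtain $\Phi_n(\zeta^{p^k}) \equiv \Phi_n(\zeta^{p^{k-1}}) \pmod{p^k}$, and dividing by the unit denominator gives $\Phi_{p^kn}(\zeta) \equiv 1 \pmod{p^k}$. For $p = 2$ and $k \geq 2$ the identical argument upgrades to modulus $2^{k+1}$ once (i) is replaced by the stronger statement that the exponent of $(\mathbb{Z}/2^{k+1}\mathbb{Z})^\times$ is $2^{k-1}$, so that $\zeta^{2^{k-1}} \equiv 1 \pmod{2^{k+1}}$ and hence both $\zeta^{2^k}$ and $\zeta^{2^{k-1}}$ are $\equiv 1 \pmod{2^{k+1}}$.

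For part~\ref{rc2}, where $n = m$, among the divisors $d \mid n$ only $d = n$ satisfies $\zeta^d \equiv 1 \pmod p$, so only the term $g_n$ (which carries exponent $\mu(1)=1$) is singular; every $g_d$ with $d < n$ falls under the unit analysis of part~\ref{rc1} and is $\equiv 1 \pmod{p^k}$. For $g_n = \Phi_p(\zeta^{np^{k-1}})$ I would set $w = \zeta^{np^{k-1}}$, use lifting-the-exponent to get $v_p(w-1) = v_p(\zeta^n-1) + (k-1) \geq k$, and expand $\Phi_p(w) = (w^p-1)/(w-1)$ binomially: the leading term contributes $p$, while $p \mid \binom{p}{j}$ for $1 \le j \le p-1$ ($p$ odd) together with $v_p(w-1) \geq k$ kills every remaining term, yielding $g_n \equiv p \pmod{p^{k+1}}$. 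Since $v_p(g_n)=1$, multiplying by the product of the unit terms (each $\equiv 1 \pmod{p^k}$) preserves the congruence modulo $p^{k+1}$, so $\Phi_{p^kn}(\zeta) \equiv p \pmod{p^{k+1}}$.

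The case $p = 2$ is the main obstacle and must be treated separately throughout, since the mod-$p$ order is then always trivial ($m=1$), collapsing the clean order dichotomy, and every target modulus is one power of $2$ higher than Euler's theorem alone supplies. Here part~\ref{rc2} reduces to $n = 1$, where $\Phi_{2^k}(\zeta) = \zeta^{2^{k-1}}+1$ by Lemma~\ref{cycl-lem}\ref{cycl1}; the congruence $\Phi_{2^k}(\zeta) \equiv 2 \pmod{2^{k+1}}$ for $k \geq 2$ then follows at once from $\zeta^{2^{k-1}} \equiv 1 \pmod{2^{k+1}}$, while the weaker $k=1$ statement is just the fact that $\zeta+1$ is even. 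I expect the bookkeeping around these $2$-adic exponent bounds, and the verification that the lower-order binomial terms in $g_n$ really vanish to the claimed precision, to be the only delicate points; the odd-$p$ arguments are otherwise routine.
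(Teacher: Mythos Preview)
Your argument is correct. Part~\ref{rc1} is handled exactly as in the paper: both of you invoke the ratio $\Phi_{p^kn}(\zeta)=\Phi_n(\zeta^{p^k})/\Phi_n(\zeta^{p^{k-1}})$, observe that the denominator is a $p$-adic unit because $\text{ord}_p(\zeta^{p^{k-1}})=m\neq n$, and use $\zeta^{p^k}\equiv\zeta^{p^{k-1}}\pmod{p^k}$ (with the group-exponent upgrade at $p=2$) to conclude.

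Part~\ref{rc2} is where the two arguments genuinely diverge. The paper factors $\Phi_n(X)\equiv\prod_{(m,n)=1}(X-\zeta^{p^km})\pmod{p^{k+1}}$ using that $\zeta^{p^k}$ is a primitive $n$-th root of unity modulo $p^{k+1}$ (this uses $n\mid p-1$ and a Hensel-type lift), then rewrites $\Phi_{p^kn}(\zeta)$ as $\zeta^{\varphi(p^kn)}\prod_{(m,n)=1}\Phi_{p^k}(\zeta_m)$ with $\zeta_m=\zeta^{pm-1}$, and analyses each factor. You instead pass through the M\"obius product $\Phi_{p^kn}(\zeta)=\prod_{d\mid n}g_d^{\mu(n/d)}$ and isolate the single singular factor $g_n=\Phi_p(\zeta^{np^{k-1}})$, handling it by lifting-the-exponent and a binomial expansion. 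Your route is more elementary---it avoids the factorisation of $\Phi_n$ over $\mathbb{Z}/p^{k+1}\mathbb{Z}$ entirely---and the check that multiplying $g_n\equiv p\pmod{p^{k+1}}$ by a product of units each $\equiv 1\pmod{p^k}$ preserves the congruence modulo $p^{k+1}$ is exactly the observation that $(1+p^ka)(p+p^{k+1}b)\equiv p\pmod{p^{k+1}}$. The paper's decomposition, on the other hand, is the one that extends verbatim to the inert-prime computations in the proof of Theorem~\ref{modpk-thm}, where the product over $(m,n)=1$ is reused with $\zeta\in\mathcal{O}_\mathcal{K}$; your M\"obius product would also transport there, but the bookkeeping with the extra $\beta^{\varphi(p^kn)}$ factor is less automatic.
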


\begin{proof}
\ref{rc1} If $n \neq \text{ord}_p(\zeta)$, then $\Phi_{n}(\zeta) \not\equiv 0 \pmod {p}$, and therefore
\begin{equation*}
    \Phi_{n}(\zeta^{p^{k-1}}) \not\equiv 0 \pmod {p^k}    
\end{equation*}
for all $k \geq 1$; we get
\begin{equation*}
    \Phi_{p^kn}(\zeta)= \Phi_n(\zeta^{p^k})\Phi_n(\zeta^{p^{k-1}})^{-1} \equiv \Phi_n(\zeta^{p^k})\Phi_n(\zeta^{p^k})^{-1}=1 \pmod{p^k}.
\end{equation*}
When $p = 2$, we have $\zeta \equiv 1 \pmod{2}$, from which the stronger congruences
\begin{equation*}
    \Phi_n(\zeta^{2^k})\Phi_n(\zeta^{2^{k-1}})^{-1} \equiv 1 \pmod{2^{k+1}}
\end{equation*}
for $k > 1$ follow immediately.

\ref{rc2} If $p=2$, so that $\zeta \equiv 1 \pmod{2}$, $n=1$, we have $\Phi_{2^k}(X)=X^{2^{k-1}}+1$, and we are done. Suppose $p > 2$; it follows from $n = \text{ord}_p(\zeta)$ that $n = \text{ord}_{p^k}(\zeta^{p^{k-1}})$ for each $k \geq 1$, so that we have a factorization (not, in general, unique)
    \begin{equation*}
        \Phi_n(X) \equiv \prod_{(m,n)=1}(X-\zeta^{p^k m}) \pmod{p^{k+1}\mathbb{Z}[X]},
    \end{equation*}
    the product running over any reduced residue system modulo $n$. Therefore
    \begin{align*}
        \Phi_{p^k n}(X) &= \frac{\Phi_n(X^{p^k})}{\Phi_n(X^{p^{k-1}})} \\ &\equiv \prod_{(m,n)=1}\frac{X^{p^k}-\zeta^{p^km}}{X^{p^{k-1}}-\zeta^{p^km}} \\ &\equiv \prod_{(m,n)=1}\frac{X^{p^k}-\zeta^{p^{k+1}m}}{X^{p^{k-1}}-\zeta^{p^km}} \\ &= \prod_{(m,n)=1}\sum_{j=0}^{p-1}\zeta^{p^kmj}X^{p^{k-1}(p-1-j)} \pmod{p^{k+1}\mathbb{Z}[X]}, 
    \end{align*}
    from which it follows that
    \begin{equation*}
        \Phi_{p^kn}(\zeta) \equiv \zeta^{\varphi(p^kn)}\prod_{(m,n)=1}\Phi_{p^{k}}(\zeta_m) \pmod{p^{k+1}},
    \end{equation*}
    where $\zeta_m=\zeta^{pm-1} \equiv \zeta^{m-1} \pmod{p}$. It is easy to see that
    \begin{equation*}
        \Phi_{p^k}(\zeta_m) \equiv p \pmod{p^{k+1}}    
    \end{equation*}
    for $m \equiv 1 \pmod{n}$; if $m \not\equiv 1 \pmod{n}$, then $\text{ord}_{p}(\zeta_m) > 1$, and therefore $\Phi_{p^k}(\zeta_m) \equiv 1 \pmod{p^k}$ by \ref{rc1}. Since obviously $\zeta^{\varphi(p^{k}n)} \equiv 1 \pmod {p^k}$, we are done.
\end{proof}

\section{Proofs of Main Results}

\begin{proof}[Proof of Theorem \ref{modpk-thm}]

We work with
\begin{equation*} \label{mcycl-eq}
M_{p^k n} = \beta^{\varphi(p^k n)}\Phi_{p^kn}(\zeta),
\end{equation*}
for $k, n \geq 1$, where $\zeta = \alpha\beta^{-1}$. It will be clear in the course of the proof that $\zeta$ is $p$-integral in $\mathcal{K}=\mathbb{Q}[\sqrt{D}]$ whenever this is necessary.

\ref{mpk1} For $p \mid D$, let $\mathfrak{p}=\left(p, \sqrt{D} \right)$ be the unique prime of $\mathcal{K}$ lying over $p$. Since $\alpha - \beta = \sqrt{D}$, we see that $\alpha \equiv \beta \pmod{\mathfrak{p}}$, and $\alpha\beta=Q$ implies that both $\alpha$, $\beta$ are units modulo $\mathfrak{p}$; so $\zeta \equiv 1 \pmod{\mathfrak{p}}$. If $p = 2$, note moreover that $2 \mid D$ implies $2 \mid P$, in which case we have $4 \mid D$. It follows that, in this case, actually $\alpha \equiv \beta \pmod{2\mathcal{O}_{\mathcal{K}}}$, $\zeta \equiv 1 \pmod{2\mathcal{O_K}}$. Applying the identity $v_{\mathfrak{p}}(p)=2$, it is straightforward to obtain the following refinement of Lemma \ref{ratcon-lem}: if $n = 1$, then
\begin{equation*}
\Phi_{p^k}(\zeta) \equiv
\begin{cases}
0 \pmod{\mathfrak{p}^2}, &\text{if } p \leq 3, ~k = 1, \\
p \pmod{\mathfrak{p}^{2k+1}}, &\text{if } p > 3, \text{ or } p \leq 3, ~k \geq 2;
\end{cases}
\end{equation*}
and if $n > 1$, then
\begin{equation*}
\Phi_{p^k n}(\zeta) \equiv
\begin{cases}
1 \pmod{\mathfrak{p}^2}, &\text{if } p = 2, k =1 \\
1 \pmod{\mathfrak{p}^{2k+1}}, &\text{if } p = 2, ~k > 1, \\
1 \pmod{\mathfrak{p}^{2k-1}}, &\text{if } p > 2. 
\end{cases}
\end{equation*} 

If $p > 2$, we also have
\begin{equation*}
\beta^{\varphi(p^k n)} \equiv 1 \pmod{\mathfrak{p}^{2k-1}}
\end{equation*}
for all $k, n \geq 1$, by Fermat's little theorem for
\begin{equation*}
\mathcal{O}_\mathcal{K}/\mathfrak{p} \simeq \mathbb{Z}/p\mathbb{Z}.
\end{equation*}
When $n=1$, the congruence for $p=3$, $k=1$ cannot be improved upon in general; otherwise, if $p=3$, $k > 1$ or $p > 3$, $k \geq 1$, then it follows from the two congruences above that
\begin{equation*}
M^U_{p^k} \equiv p \pmod{\mathfrak{p}^{2k+1}},
\end{equation*}
so
\begin{equation*}
M^U_{p^k}-p \in \mathfrak{p}^{2k+1}~\cap~\mathbb{Z}=p^{k+1}\mathbb{Z}.
\end{equation*}
Similarly, if $n > 1$, then
\begin{equation*}
M^U_{p^k n}-1 \in \mathfrak{p}^{2k-1}~\cap~\mathbb{Z}=p^{k}\mathbb{Z}.
\end{equation*}

If $p=2$, $n = 1$ the congruences for the first term stabilize more slowly:
\begin{equation*}
\beta^{\varphi(2^k)} \equiv
\begin{cases}
1 \pmod{\mathfrak{p}}, &\text{if } k = 1, \\
1 \pmod{\mathfrak{p}^2}, &\text{if } k = 2, \\
1 \pmod{\mathfrak{p}^{2k-1}}, &\text{if } k \geq 3,
\end{cases}
\end{equation*}
leading to slightly different behavior in the congruences at small exponents, as in the theorem statement. On the other hand, if $n > 1$, then $(2,n)=1$ forces $n \geq 3$, so that
\begin{equation*}
\varphi(n) \equiv 0 \pmod{2},
\end{equation*}
from which it follows that
\begin{equation*}
\beta^{\varphi(n)} \equiv 1 \pmod{2\mathcal{O_K}},
\end{equation*}
and therefore
\begin{equation*}
\beta^{\varphi(2^k n)} \equiv 1 \pmod{\mathfrak{p}^{2k+1}}
\end{equation*}
for $k > 1$. The stronger congruences in this case then follow as in the previous argument.

\ref{mpk2} If $p \mid Q$, we necessarily have $\left(\frac{D}{p} \right)=1$, since $D=P^2-4Q$ and $p \nmid (P,Q)$ by hypothesis. Write $p\mathcal{O_K}=\mathfrak{p}\mathfrak{q}$. From $\alpha\beta = Q$, $\alpha+\beta=P$, we find that exactly one of $\alpha$, $\beta$ vanishes modulo $\mathfrak{p}$. Since
\begin{equation*}
\beta^{\varphi(n)}\Phi_n(\alpha/\beta) = \alpha^{\varphi(n)}\Phi_n(\beta/\alpha),
\end{equation*}
we assume without loss of generality that $\alpha \equiv 0 \pmod{\mathfrak{p}}$. Then
\begin{equation*}
    \beta^{\varphi(p^k)} \equiv 1 \pmod{\mathfrak{p}^k},~\zeta^{p^{k-1}} \equiv 0 \pmod{\mathfrak{p}^{k}},
\end{equation*}
from which it follows immediately that
\begin{equation*}
    M^U_{p^kn} \equiv 1 \pmod{\mathfrak{p}^k}.
\end{equation*}
Similarly,
\begin{equation*}
    M^U_{p^kn} \equiv 1 \pmod{\mathfrak{q}^k}.
\end{equation*}
By the Chinese Remainder Theorem,
\begin{equation*}
    M^U_{p^kn} \equiv 1 \pmod{p^k\mathcal{O_K}}.
\end{equation*}

If $p \nmid Q$, then it is clear that $z_u(p)=\text{ord}_{p\mathcal{O_K}}(\zeta)$ is the multiplicative order of $\zeta$ modulo $p\mathcal{O_K}$. Working with the Frobenius action
\begin{equation*}
    X \longmapsto X^p: \mathcal{O}_\mathcal{K}/p\mathcal{O}_\mathcal{K} \longrightarrow \mathcal{O}_\mathcal{K}/p\mathcal{O}_\mathcal{K}
\end{equation*}
on
\begin{equation*}
    \mathcal{O}_\mathcal{K}/p\mathcal{O}_\mathcal{K} \simeq
    \begin{cases}
        \mathbb{F}_p \times \mathbb{F}_p, &\text{if } \left(\frac{D}{p} \right) = 1, \\
        
        \mathbb{F}_{p^2}, &\text{if } \left(\frac{D}{p} \right) = -1,
    \end{cases}
\end{equation*}
we find that
\begin{equation*}
    \zeta^p \equiv \zeta^{\left(\frac{D}{p} \right)} \pmod{p\mathcal{O_K}}.
\end{equation*}
Incidentally, this proves the familiar fact that $z_U(p) \mid p - \left(\frac{D}{p} \right)$ for odd primes.

If $\left(\frac{D}{p} \right)=1$, note that the nontrivial $\mathbb{Q}$-automorphism of $\mathcal{K}$ acts as
\begin{equation*}
    \zeta + \mathfrak{p} \longmapsto\zeta^{-1}+ \mathfrak{q}: \mathcal{O_K}/\mathfrak{p} \longrightarrow \mathcal{O_K}/\mathfrak{q},
\end{equation*}
from which it follows immediately that
\begin{equation*}
    \text{ord}_{\mathfrak{p}}(\zeta)=\text{ord}_\mathfrak{q}(\zeta)=\text{ord}_{p \mathcal{O_K}}(\zeta) = z_U(p).
\end{equation*}
Since
\begin{equation*}
    \beta^{\varphi(p^kn)} \equiv 1 \pmod{p^k\mathcal{O_K}},
\end{equation*}
we can read off the congruences for $M^U_{p^kn}=\beta^{\varphi(p^kn)}\Phi_{p^kn}(\zeta)$ directly from Lemma \ref{ratcon-lem}, invoking the Chinese Remainder Theorem as necessary.

If $\left(\frac{D}{p} \right) = -1$, we adapt the arguments in Lemma \ref{ratcon-lem} as follows. Suppose first that $n \neq z_U(p)$, which implies that $\Phi_n(\zeta) \neq 0 \pmod{p\mathcal{O_K}}$. If $n = 1$, then from
\begin{equation*}
    \zeta \not\equiv 1 \pmod{p\mathcal{O_K}}, ~\zeta^{p^{k-1}} \equiv \zeta^{-p^k} \pmod{p^k\mathcal{O_K}},,
\end{equation*}
it follows easily that
\begin{equation*}
    \Phi_{p^k}(\zeta) \equiv -\zeta^{-p^{k-1}} \equiv -\zeta^{p^k} \pmod{p^k\mathcal{O_{K}}},
\end{equation*}
for all $k \geq 1$; therefore
\begin{align*}
    M^U_{p^k} &\equiv -\beta^{\varphi(p^k)}\zeta^{p^k} \\ &= -\beta^{-p^{k-1}}\alpha^{p^k} \\ &\equiv -\alpha^{-p^k}\alpha^{p^k} \\ &= -1 \pmod{p^k\mathcal{O_\mathcal{K}}}.
\end{align*}

If $n > 1$, we get
\begin{equation*}
    M^U_{p^kn} \equiv \beta^{\varphi(p^kn)} \Phi_n(\zeta^{p^k})\Phi_n(\zeta^{-p^k})^{-1} \pmod{p^k\mathcal{O_K}}.
\end{equation*}
Since
\begin{equation*}
    \Phi_n(X^{-1})=X^{-\varphi(n)}\Phi_n(X)
\end{equation*}
for $n > 1$, this reduces to
\begin{align*}
    M^U_{p^kn } &\equiv \beta^{\varphi(p^k n)}\zeta^{p^k \varphi(n)} \\ &= \beta^{-p^{k-1}\varphi(n)}\alpha^{p^k \varphi(n)} \\ &\equiv \beta^{-p^{k-1}\varphi(n)}\beta^{p^{k-1}\varphi(n)} \\ &\equiv 1 \pmod{p^k\mathcal{O_K}}.
\end{align*}

Suppose finally that $n = z_U(p)$; then necessarily $n > 1$. We have
\begin{equation*}
    n = \text{ord}_{p^{k+1}\mathcal{O_K}}\zeta^{p^k} = \text{ord}_{p^{k+1}\mathcal{O_K}}\zeta^{-p^k}
\end{equation*}
for all $k \geq 1$, from which we obtain polynomial congruences
\begin{equation*}
    \Phi_n(X) \equiv \prod_{(m,n)=1}(X-\zeta^{p^km}) \equiv \prod_{(m,n)=1}(X-\zeta^{-p^km}) \pmod{p^{k+1}\mathcal{O_K}[X]},
\end{equation*}
so
\begin{align*}
    \Phi_{p^kn}(X) &\equiv \prod_{(m,n)=1}\frac{X^{p^k}-\zeta^{-p^km}}{X^{p^k-1}-\zeta^{p^km}} \\
    &\equiv \prod_{(m,n)=1}\frac{X^{p^k}-\zeta^{p^{k+1}m}}{X^{p^k-1}-\zeta^{p^km}} \\
    &=\prod_{(m,n)=1}\sum_{j=0}^{p-1}\zeta^{p^kmj}X^{p^{k-1}(p-1-j)} \pmod{p^{k+1}\mathcal{O_K}[X]},
\end{align*}
the product running through any reduced residue system modulo $n$. Evaluating at $\zeta$ and using the facts that $-m$ runs through a reduced residue system modulo $n$ as $m$ does, and $p \equiv -1 \pmod{n}$, we find as in Lemma \ref{ratcon-lem} that
\begin{equation*}
    \Phi_{p^kn}(\zeta)\equiv\zeta^{\varphi(p^kn)}\prod_{(m,n)=1}\Phi_{p^k}(\zeta_m) \pmod{p^{k+1}\mathcal{O_K}},
\end{equation*}
where $\zeta_m=\zeta^{m-1}$. If $m \equiv 1 \pmod{n}$, we have
\begin{equation*}
    \Phi_{p^k}(\zeta_m) \equiv
    \begin{cases}
        0 \pmod{2\mathcal{O_K}}, &\text{if } p = 2, k = 1, \\
        p \pmod{p^{k+1}\mathcal{O_K}}, &\text{otherwise}.
    \end{cases}
\end{equation*}
This is already enough to complete the argument for $p = 2$, $k = 1$, so we assume from this point on that this is not the case.

If $n =2$, or in other words if $p \mid P$, then necessarily $p > 2$ (since $p \nmid D$ by hypothesis), and $\zeta \equiv -1 \pmod{p\mathcal{O_K}}$; then $m \equiv 1 \pmod{n}$ is the only contribution to the product, so that
\begin{equation*}
    M^U_{2p^k} \equiv \beta^{\varphi(p^k)}p \pmod{p^{k+1}\mathcal{O_K}}.
\end{equation*}
Since $\zeta \equiv -1 \pmod{p\mathcal{O_K}}$, therefore
\begin{equation*}
    -1 \equiv \zeta^{-p} = \beta^p \alpha^{-p} \equiv \beta^{p-1} \pmod{p\mathcal{O_K}},
\end{equation*}
so $\beta^{\varphi({p^k})} \equiv -1 \pmod{p^{k}\mathcal{O_K}}$, and therefore
\begin{equation*}
    M^U_{2p^k} \equiv -p \pmod{p^{k+1}}.
\end{equation*}

If $n > 2$, we need to work out the remaining terms in the product. For $m \not\equiv 1 \pmod{n}$, we have that $\text{ord}_{p\mathcal{O_K}}(\zeta_{m}) > 1$ divides $p +1$. Therefore
\begin{align*}
    \Phi_{p^k}(\zeta_m) &= \sum_{j=0}^{p-1}\zeta_m^{p^{k-1}j} \\
    &= \sum_{j=0}^{p+1}\zeta_m^{p^{k-1}j}-\zeta_m^{(p+1)p^{k-1}}-\zeta_m^{p^k} \\
    &\equiv -\zeta_m^{p^k} \pmod{p^k\mathcal{O_K}}.
\end{align*}
Obviously, $(-1)^{\varphi(n)-1}=-1$ for $n > 2$, so
\begin{align*}
    \Phi_{p^kn}(\zeta) &\equiv-\zeta^{\varphi(p^kn)}p\prod_{\substack{(m,n)=1 \\ m \not\equiv 1 \pmod{n}}}\zeta_m^{p^k} \\
    &=-\zeta^{\varphi(p^kn)+S}p \pmod{p^{k+1}\mathcal{O_K}},
\end{align*}
where
\begin{equation*}
    S = \sum_{\substack{(m,n)=1 \\m \not\equiv 1 \pmod{n}}}p^k(m-1) \equiv -p^k\varphi(n) \pmod{n};
\end{equation*}
we get
\begin{equation*}
    \Phi_{p^kn}(\zeta) \equiv -\zeta^{-p^{k-1}\varphi(n)}p \pmod{p^{k+1}\mathcal{O_K}},
\end{equation*}
and therefore
\begin{equation*}
    M^U_{p^kn} \equiv -\beta^{\varphi(p^kn)}\zeta^{p^{k}\varphi(n)}p \pmod{p^{k+1}\mathcal{O_K}}.
\end{equation*}
We have seen already that
\begin{equation*}
    \beta^{\varphi(p^kn)}\zeta^{p^k \varphi(n)} \equiv 1 \pmod{p^k\mathcal{O_K}},
\end{equation*}
so we are done.

\ref{mpk4} It is clear from the valuations in Proposition \ref{vpM-thm} for $p \mid (P,Q)$ that 
\begin{equation*}
v_p(M^U_{p^kn}) \geq p^{k-1}
\end{equation*}
for all $k \geq 1$ except possibly when $2v_p(P) > v_p(Q)$, $p=2$, $n = 1$, $k \geq 2$ in which case we have
\begin{equation*}
v_2(M^U_{2^k}) \geq 2^{k-2}+1.
\end{equation*}
\end{proof}

\begin{proof}[Proof of Theorem \ref{vmodpk-thm}]
The $p$-adic congruences for $M^V$ follow by a straightforward application of the doubling formula in Proposition \ref{dbl-thm} and the $p$-adic congruences for $M^U$ in the previous theorem.
\end{proof}

\begin{proof}[Proof of Corollary \ref{modn-thm}] 
\ref{modn1} It suffices to show that if $\partial_q(n) =z_u(q)$ for some prime $q \mid n$, then $q = p$, except in the listed cases. In fact, this is obvious: if $\partial_q(n) =z_u(q)$, then $\partial_{q}(n) \leq q+1$ (since $z_U(q) \mid q-\left(\frac{D}{q}\right)$), from which it follows immediately that either $q = p$, or else $q = 2$, $p = \partial_2(n) =3$.

\ref{modn2} Another straightforward application of the doubling formula.
\end{proof}

\begin{proof}[Proof of Corollary \ref{lift-thm}]
Applying the congruences in Theorems \ref{modpk-thm} and \ref{vmodpk-thm} to the identities
\begin{equation*}
    \frac{U_{p^k n}}{U_{p^{k-1}n}}= \prod_{d \mid n}M^U_{p^k d}, ~\frac{V_{p^k n}}{V_{p^{k-1}n}}= \prod_{d \mid n}M^V_{p^k d}
\end{equation*}
gives each congruence directly, with the exception of the last congruence
\begin{equation*}
    V_{p^kn} \equiv V_{p^{k-1}n} \pmod{p^k},
\end{equation*}
which is easily derived from the invariance of $\alpha+\beta$ under Frobenius actions in the Binet formula
\begin{equation*}
    V_n=\alpha^n+\beta^n.
\end{equation*}
\end{proof}

\begin{proof}[Proof of Corollary \ref{mult-cor}]
Suppose first that $\partial_p(n)=1$, that is, $n = p^k$ for some $k \geq 1$, and consider any prime $q \neq p$ dividing $M^U_{p^k}$. By Proposition \ref{vpM-thm}, it follows that $p^k = q^jz_U(q)$ for some $j \geq 0$; evidently, we must have $j = 0$, $z_U(q)=p^k$, so that also $v_q(M^U_{p^k})=v_q(U_{p^k})$. If $p \mid D$, then $v_p(M^U_{p^k})=1$ for $k \geq 2$, so that
\begin{equation*}
    \left|M^U_{p^k} \right| = p \times \prod_{z_U(q)=p^k}q^{v_q(U_{p^k})},
\end{equation*}
the product running over all characteristic factors of $U_{p^k}$. Moreover, $z_U(q)= p^k$ implies that $q \equiv \left(\frac{D}{q} \right) \pmod{p^k}$, so
\begin{equation*}
    \left|M^U_{p^k} \right| \equiv p \times \prod_{z_U(q)=p^k}\left(\frac{D}{q} \right)^{v_q(U_{p^k})} \pmod{p^k}.
\end{equation*}
Since
\begin{equation*}
    \prod_{z_U(q)=p^k}\left(\frac{D}{q} \right)^{v_q(U_{p^k})}= -1 \text{ or } 1,
\end{equation*}
and
\begin{equation*}
    M^U_{p^k} \equiv p \pmod{p^k}
\end{equation*}
by Theorem \ref{modpk-thm}, we find by considering solutions in $p$ prime, $k \geq 2$ to the congruence $p \equiv -p \pmod{p^k}$ that
\begin{equation*}
    \prod_{z_U(q)=p^k}\left(\frac{D}{q} \right)^{v_q(U_{p^k})}=\text{sign}(M^U_{p^k})
\end{equation*}
provided $k \geq 3$, if $p = 2$, or $k \geq 2$, if $p > 2$.

Similarly, if $p \nmid D$, we find that 
\begin{equation*}
    \left|M^U_{p^k} \right| \equiv  \prod_{z_U(q)=p^k}\left(\frac{D}{q} \right)^{v_q(U_{p^k})} \pmod{p^k}
\end{equation*}
and
\begin{equation*}
    M^U_{p^k}  \equiv \left(\frac{D}{p} \right) \pmod{p^k},
\end{equation*}
for all $k \geq 1$, from which we conclude by considering solutions in $p$ prime, $k \geq 1$ to the congruence $1 \equiv -1 \pmod{p^k}$ that
\begin{equation*}
    \prod_{z_U(q)=p^k}\left(\frac{D}{q} \right)^{v_q(U_{p^k})}=\left(\frac{D}{p}\right) \times \text{sign}(M^U_{p^k})
\end{equation*}
for all $k \geq 1$, except possibly if $p = 2$, $k = 1$.

If $\partial_p(n) > 1$, and $q \neq p$ divides $M^U_n$, then again $n = q^j z_U(q)$ for some $j \geq 0$; if $j > 0$, then we find, as in the proof of Corollary \ref{modn-thm}, that $p \leq z_U(q) \leq q+1$, in which case necessarily $q = 2$, $p = \partial_2(n) = 3$. In this case, we have, for $k \geq 3$, that
\begin{equation*}
    \left|M^U_{2^k \cdot 3} \right| \equiv 2 \times \prod_{z_U(q)=2^k \cdot 3}\left(\frac{D}{q} \right)^{v_q(U_{2^k \cdot 3})} \pmod{2^k \cdot 3}
\end{equation*}
and
\begin{equation*}
    M^U_{2^k \cdot 3} \equiv 2 \left(\frac{D}{2} \right) \pmod{2^k \cdot 3};
\end{equation*}
otherwise, if $p > 3$ or $9 \mid n$, then
\begin{equation*}
    \left|M^U_n \right| \equiv
    \begin{cases}
        p \times \prod\limits_{z_U(q)=n}\left(\frac{D}{q} \right)^{v_q(U_n)} &\pmod{n}, \text{ if } \partial_p(n)=z_U(p), \\
        \prod\limits_{z_U(q)=n}\left(\frac{D}{q} \right)^{v_q(U_n)} &\pmod{n}, \text{ otherwise},
    \end{cases}
\end{equation*}
and
\begin{equation*}
    M^U_n \equiv
    \begin{cases}
        \left(\frac{D}{p} \right)p &\pmod{n},  \text{ if } \partial_{p}(n) = z_U(p), \\
        1 &\pmod{n}, \text{ otherwise},
    \end{cases}
\end{equation*}
by Theorem \ref{modn-thm}. In either case, the result follows by the same analysis as in the previous paragraph.
\end{proof}


\end{document}